\documentclass[10pt, english]{smfart}
\usepackage{amssymb}
\usepackage{amsmath}
\usepackage{amsthm}
\usepackage{mathrsfs}
\usepackage{frcursive}
\usepackage{graphicx}
\usepackage{color}
\usepackage{manfnt}
\usepackage[ansinew]{inputenc}
\usepackage[all]{xy}
\usepackage{hyperref}
%\usepackage{vmargin}
%\setmarginsrb{1cm}{1cm}{1cm}{1cm}{1cm}{1cm}{1cm}{1cm}
\usepackage{pdfsync}
%\usepackage{fancyhdr}
% ----------------------------------------------------------------
\vfuzz2pt % Don't report over-full v-boxes if over-edge is small
\hfuzz2pt % Don't report over-full h-boxes if over-edge is small
% THEOREMS -------------------------------------------------------
\newtheorem{axiom}{Axiom}
\newtheorem{thm}{Theorem}[section]
\newtheorem{cor}[thm]{Corollary}

\newtheorem{lem}[thm]{Lemma}
\newtheorem{prop}[thm]{Proposition}

\theoremstyle{definition}
\newtheorem{rem}[thm]{Remark}

\newtheorem*{thm*}{Theorem}

\newtheorem{example}[thm]{Example}

\newtheorem{notations}[thm]{Notations}

\theoremstyle{remark}
\synctex=1	

\numberwithin{equation}{section}
% MATH -----------------------------------------------------------

\newcommand{\abs}[1]{\left\vert#1\right\vert}

\newcommand{\Qp}{\mathbb Q_{p}}
\newcommand{\ord}{\mathrm{ord}\:}
\newcommand{\ac}{\overline{\mathrm{ac}}\:}

\newcommand{\Def}{\mathrm{Def}}

\newcommand{\A}{\mathbb A}
\newcommand{\Kdim}{\mathrm{Kdim}\:}

\newcommand{\Z}{\mathbb Z}

\renewcommand{\L}{\mathbb L}

\newcommand{\ra}{\rightarrow}

\newcommand{\RDef}{\mathrm{RDef}}

\newcommand{\C}{\mathcal C}

\newcommand{\Jac}{\text{Jac}\:}

\def\llp{\mathopen{(\!(}}

\def\rrp{\mathopen{)\!)}}

\def\Id{\mathrm{Id}}

\def\cartesien{%
    \ar@{-}[]+R+<6pt,-1pt>;[]+RD+<6pt,-6pt>%
    \ar@{-}[]+D+<1pt,-6pt>;[]+RD+<6pt,-6pt>%
  }

\definecolor{vert}{rgb}{0,0.6,0.2}

\title[Commutativity of pull-back and push-forward functors]
{On the commutativity of pull-back and push-forward functors on motivic constructible functions}
\author{Jorge Cely}
\address{Bogot\'a, Colombia}
\email{celyje@gmail.com}
\author{Michel Raibaut}
\address{Univ. Grenoble Alpes, Univ. Savoie Mont Blanc, CNRS, LAMA}
\address{Le Bourget-du-lac, 73376, France.}
\email{Michel.Raibaut@univ-smb.fr}
\urladdr{raibautm.perso.math.cnrs.fr/site/MichelRaibaut.html}

\begin{document}
\maketitle
\begin{abstract}
	In this article, we study the commutativity between the pull-back and the push-forward functors on constructible functions in Cluckers--Loeser motivic integration.
\end{abstract}
\date{\today}

\section*{Introduction}

Let $k$ be a characteristic zero field. By analogy with integration over local fields, Kontsevich introduced in \cite{Kon95a} an integration theory on finite dimensional vector spaces over $k((t))$, called \emph{motivic integration}, with values in a Grothendieck ring of varieties $K_0(Var_k)$. Later, Cluckers -- Loeser in \cite{CluLoe08a} (announced in \cite{CluLoe04a}, \cite{CluLoe04b}) generalized that construction allowing in particular integrals with parameters in the context of henselian valued fields of equal characteristic zero, and in \cite{HruKaz06}, Hrushovski -- Kazhdan treated the case of algebraically closed valued fields of equal characteristic zero. 

For any \emph{definable sets} $X$, given by a first order formula in the Denef-Pas language, Cluckers -- Loeser construct in \cite{CluLoe08a} an algebra $C(X)$ of \emph{constructible motivic functions} defined on $X$, and in \cite{CluLoe05b} and \cite{CluLoe10a} they enlarge it in an algebra $C(X)^{exp}$ of \emph{exponential constructible functions}.  
Moreover, for any definable function $f:X\to Y$, they define a pull-back functor $f^* : C(Y)^{exp} \to C(X)^{exp}$, an abelian subgroup $I_YC(f)^{exp}$ of $C(X)^{exp}$ of $f$-integrable constructible functions and a push-forward functor $f_! : I_YC(f)^{exp} \to C(Y)^{exp}$ which corresponds to an integration along fibers of $f$. 
Roughly speaking, for any definable set $X$, there exists integers $m$, $n$ and $r$ such that for any $k$-field extension $K$, the set of $K$-rational points $X(K)$ is contained in $K((t))^m\times K^n \times \mathbb Z^r$. Such definable set admits a cell decomposition and similarly to the construction of the integration against Euler characteristic in the real semi-algebraic setting, the construction of the functor $f_!$ is given by an induction process on the valued field dimension.
We recall in the first section of this article main ideas, definitions and results of these constructions. 

In \cite{WF} the second author introduced a notion of \emph{definable distributions} in Cluckers--Loeser motivic setting. He introduced also a notion of \emph{motivic wave front set}, which allows him, as in the real setting \cite{Hormander83} or in the p-adic setting \cite{Hei85a} and \cite{UD}, to study the pull-back of a distribution by a function
which requires the natural following commutativity relation between pull-back and push-forward functors

\begin{thm*} Let $X$, $W$ and $W'$ be definable sets over $k$, let $\gamma$ be a definable morphism from $W$ to $W'$. 
	We denote by $\pi_W$ the projection from $W\times X$ to $W$ and by $\pi_{W'}$ the projection from $W' \times X$ to $W'$. Let $[\varphi]$ be a constructible exponential function in $C(W'\times X)^{exp}$.
	\begin{enumerate}
		\item   If $[\varphi]$ is $\pi_{W'}$-integrable then 
			$[(\gamma \times Id_X)^*\varphi]$ is $\pi_W$-integrable.
			Furthermore, if $\gamma$ is onto then this implication is an equivalence.
		\item  If $[\varphi]$ satisfies the condition (\ref{thm:integrability-condition}) then  
			$$ \pi_{W!}\left[(\gamma\times Id_X)^*\varphi \right] = \gamma^*(\pi_{W'!} [\varphi]).$$
	\end{enumerate}

\end{thm*}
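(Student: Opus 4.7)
The plan is to prove both statements by induction on the valued-field dimension of $X$, using the inductive construction of the push-forward $\pi_{W'!}$ via cell decompositions. The key structural observation is that the square
\[
\begin{array}{ccc}
W \times X & \xrightarrow{\gamma \times \Id_X} & W' \times X \\
\pi_W \downarrow & & \downarrow \pi_{W'} \\
W & \xrightarrow{\gamma} & W'
\end{array}
\]
is Cartesian: the fiber of $\pi_W$ over $w\in W$ is $\{w\}\times X$, and it maps bijectively onto the fiber $\{\gamma(w)\}\times X$ of $\pi_{W'}$ over $\gamma(w)$. Hence the push-forward is morally a fiberwise integration over data that is unchanged by the pull-back, and the desired equality is a motivic incarnation of the base-change identity for a Cartesian square.

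Concretely, I would first reduce to the three elementary steps that appear in the construction of $\pi_{W'!}$: integration of one valued-field variable along a cell, integration against the motivic Euler characteristic of one residue-field variable, and summation over one $\Z$-variable. Every $\pi_{W'!}$ is obtained by composing such steps after choosing a cell decomposition adapted to $\pi_{W'}$. Since cells are defined by formulas involving only the fiber variables in $X$, together with parameters ranging in $W'$, the pull-back by $\gamma \times \Id_X$ sends any cell decomposition of $W'\times X$ adapted to $\pi_{W'}$ to a cell decomposition of $W\times X$ adapted to $\pi_W$, with centers pre-composed by $\gamma$ and bounds pulled back. On each such pulled-back cell, the formula computing the elementary integration step is literally the same as the original one, because the integration variable lies in $X$ and $\gamma$ acts only on the parameters; this yields the equality step by step.

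For part (1), the integrability conditions in the inductive definition of $\pi_{W'!}$ amount, at each step, to summability of a family in $C(W')^{exp}$ indexed by the variable being integrated out; these conditions are preserved by $\gamma^*$ because a summable family pulls back to a summable family. If $\gamma$ is onto, the converse also holds: a failure of summability at $\gamma(w)$ is visible at $w$, and every point of $W'$ lies in the image, so summability of the pulled-back family over $W$ forces summability of the original family over $W'$. Part (2) then follows from the step-by-step comparison outlined above, applied to a common cell decomposition and assembled by the linearity of $\pi_{W'!}$ and $\gamma^*$.

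The main technical obstacle is to check that the equality is invariant under the many choices made in the construction of the push-forward: the choice of cell decomposition, and the order in which the different kinds of variables are integrated out. This amounts to verifying that $\gamma^*$ is compatible with the equivalence relations defining $C(W')^{exp}$, in particular the bi-cell identities, the changes of center with their Jacobian corrections, and the exponential identities introduced in \cite{CluLoe10a}. Once these compatibilities are established on each of the elementary Fubini-type identities of \cite{CluLoe08a}, the theorem follows by a direct induction; the computational content is minimal compared to the functoriality bookkeeping.
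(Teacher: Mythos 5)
Your overall strategy coincides with the paper's: induction on the valued-field dimension, pulling back a cell decomposition of $W'\times X$ adapted to $\varphi$ along $\gamma\times\mathrm{Id}_X$ (centers, orders and angular components precomposed with $\gamma$), reduction to the elementary steps of integrating one valued-field variable over a $0$- or $1$-cell, one residue variable, and one $\Z$-variable, with the summability reading of integrability giving part (1) and surjectivity of $\gamma$ giving the converse. One remark before the main point: the ``main technical obstacle'' you identify (independence of the choice of cell decomposition and of the order of integration) does not need to be re-verified. The Cluckers--Loeser push-forward is already a well-defined functor characterized by axioms (Fubini, additivity, projection formula, the $0$-cell and $1$-cell volume axioms, projection along residue and $\Z$-variables), and the paper's argument is organized so as to use only these axioms together with the change-of-variables formula on a single adapted decomposition, assembling the elementary cases by a Fubini-type splitting lemma and an extension lemma for inclusions; no new well-definedness check arises.

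The genuine gap is the exponential case, which is what the statement is actually about. For a function $\varphi=E(g)e(\eta)[p]\otimes\varphi_0$ in $C(W'\times X)^{exp}$, the push-forward is not simply the composite of your three elementary steps: integration of one valued-field variable is defined through the auxiliary map $\delta=(\pi\circ p,\,g\circ p,\,\eta\circ p)$ into $W'[1,1,0]$ and a decomposition of the cell into the locus where $g$ is constant on fibers, the locus where it is injective with image a ball of small volume, and the locus where the image is a ball of large volume (where the axiom $f_{!}(E(z)[1_Z])=0$ kills the contribution). Commuting this with base change by $\gamma$ requires checking that all this auxiliary data (the map $\delta$, the pieces $B$, $A_1$, $A_2$ and the functions $\tilde g$, $r$, $\eta$ of the construction recalled in example \ref{example-integration-avec-exponentielle}) pulls back correctly, and then reducing, via the splitting lemma and the explicit formula for the exponential push-forward, to the two special projections $W[1,1,0]\to W[0,1,0]$ and $W[0,1,0]\to W$, on top of the already-proved non-exponential case. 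Your proposal only gestures at ``the exponential identities introduced in \cite{CluLoe10a}'' and asserts the computational content is minimal, so this part of the argument is missing and has to be supplied as in subsection \ref{exponential-case}.
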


In this article, we start in section \ref{survey} by recalling main definitions and ideas of Cluckers--Loeser motivic integration. Then, in section \ref{proof}, we prove above theorem in a slightly more general context (lemma \ref{projlem} and theorem \ref{mainthm}) following all different steps of the construction of the theory as the induction process on the valued field dimension using cell decompositions and the computation at the residue and value group levels.

\section{Motivic integration and constructible motivic functions} \label{survey}
For the reader's convenience
we shall start by recalling briefly some definitions, notations and constructions from \cite{CluLoe08a} and
\cite{CluLoe10a} that will be used in this article.  For an introduction to this circle of ideas  we refer to the surveys \cite{CluLoe05a}, \cite{CluHalLoe11} and \cite{GorYaf09} and the notes \cite{CluLoe04a}, \cite{CluLoe04b} and \cite{CluLoe05b}. 
\subsection{Denef-Pas, Presburger language}
We fix a field $k$ of characteristic zero and we denote by $\mathrm{Field}_{k}$ the category of fields containing $k$.
For any field $K$ in this category we consider the field of Laurent series
$K(\!(t)\!)$ endowed with its natural \emph{valuation}
$$\ord : K(\!(t)\!)\setminus\{0\} \longrightarrow \mathbb Z$$
extended by $\ord 0=+\infty$,
and with the \emph{angular component} mapping
$$\ac:K(\!(t)\!)\ra K$$
defined by $\ac(x)=xt^{-\ord x}\mod t$ if $x\neq 0$ and $\ac (0) =0$.\\

We shall use the three sorted language introduced by Denef and Pas in \cite{Pas89}
$$\mathscr L_{DP,P} = (\mathbf{L}_{\mathrm{Val}},\mathbf{L}_{\mathrm{Res}},\mathbf{L}_{\mathrm{Ord}},\ord, \ac)$$
with sorts corresponding respectively to \emph{valued field}, \emph{residue field} and
\emph{value group} variables.
The languages $\mathbf{L}_{\mathrm{Val}}$ and $\mathbf{L}_{\mathrm{Res}}$ are the ring language
$\mathbf{L}_{\mathrm{Rings}}=(+,-, \cdot ,0,1)$ and the language $\mathbf{L}_{\mathrm{Ord}}$ is the Presburger language
$$\mathbf{L}_{\mathrm{PR}}=\{+,-,0,1,\leq\}\cup\{\equiv_{n}\mid n\in \mathbb N, n>1\},$$
with $\equiv_{n}$  symbols interpreted as  equivalence relation modulo $n$. Symbols $\ord$and $\ac$will be interpreted respectively as valuation and angular component, so that for any $K$ in $\mathrm{Field}_{k}$ the triple $(K(\!(t)\!),K,\mathbb Z)$ is a structure for $\mathscr L_{DP,P}$. We shall also add
constant symbols in the $\mathrm{Val}$-sort and in the $\mathrm{Res}$-sort for elements of $k(\!(t)\!)$, resp.~of $k$.\\

We will work with the $\mathcal{L}_{DP,P}$-theory $H_{\ac,0}$ of  structures whose valued field is Henselian, with characteristic zero residue field , and with value group $\Z$.
Denef and Pas proved in \cite{Pas89} the following theorem on elimination of valued field quantifiers.

\begin{thm}[Denef-Pas \cite{Pas89}, Presburger \cite{Presb29}]  \label{QE}
Every formula $\phi(x,\xi,\alpha)$ without parameters in the $\mathscr L_{DP,P}$-language, with $x$ variables in the $\mathrm{Val}$-sort, $\xi$ variables in the $\mathrm{Res}$-sort and $\alpha$ variables in the
$\mathrm{Ord}$-sort is $H_{\ac,0}$-equivalent to a finite disjunction of formulas of the form
$$\psi(\ac f_{1}(x),...,\ac f_{k}(x),\xi)\land \eta(\ord f_1(x),...,\ord f_k(x),\alpha),$$
with $\psi$ a $\mathbf{L}_{\mathrm{Res}}$-formula, $\eta$ a $\mathbf{L}_{\mathrm{Ord}}$-formula without quantifiers and $f_1,...,f_k$ polynomials in $\Z[x]$. The theory $H_{\ac,0}$ admits elimination of quantifiers in the valued field sort.
\end{thm}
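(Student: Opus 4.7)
The plan follows the classical Robinson-style test for quantifier elimination, combined with the known Presburger elimination in the Ord-sort. First, I would apply Presburger's theorem for $(\Z,+,-,0,1,\leq,\{\equiv_n\})$ to arrange that the value-group component $\eta$ of every subformula is quantifier-free in $\mathbf{L}_{\mathrm{Ord}}$; the Res-sort quantifiers may remain hidden inside $\psi$, which is explicitly permitted in the statement. The remaining and substantive task is the elimination of quantifiers over the Val-sort.

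For the Val-sort elimination, I would argue by induction on formula complexity, the crucial step being the elimination of a single existential quantifier $\exists x$ over a valued-field variable from a formula already in the target normal form. By a standard model-theoretic criterion, this reduces to the following claim: given two $\aleph_1$-saturated models $M_1,M_2$ of $H_{\ac,0}$ sharing a substructure $A$ (compatible on the residue field and value group), every valued-field element $a\in M_1$ is matched by some $a'\in M_2$ realizing the same quantifier-free formulas over $A$ in terms of $\ac$ and $\ord$ of polynomials. When $a$ is transcendental over the valued field of $A$, saturation combined with compatibility of residue-field and value-group data produces such an $a'$. When $a$ is algebraic over $A$, $a$ satisfies some minimal polynomial over $A$, and one lifts a compatible residue-field/value-group root in $M_2$ to a valued-field root using Hensel's lemma, which applies because $H_{\ac,0}$ requires the valuation to be Henselian.

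The principal obstacle is the Hensel lifting step: one must show that the data $(\ac f_i(a),\ord f_i(a))_i$ for a sufficiently rich family of polynomials $f_i\in\Z[x]$ in fact determines $a$ up to the appropriate equivalence, and that every consistent such assignment is realized. This is the technical core of Pas's argument and uses both the Henselian property and the presence of the angular component map $\ac$ in essential ways: without $\ac$, the residue-field invariants of $a$ would be too coarse for Hensel to produce a lift. Concretely, the proof in \cite{Pas89} proceeds by a careful induction on polynomial degree, showing that any atomic Val-sort condition on $x$ can be rewritten as a Boolean combination of $\ac$ and $\ord$ conditions on finitely many polynomials in $\Z[x]$, after which the existential quantifier in $x$ reduces to a condition that lives purely in the Res-sort and Ord-sort, yielding the desired normal form.
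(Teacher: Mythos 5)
The theorem you are proving is not proved in the paper at all: it is recalled verbatim from the literature and attributed to Pas \cite{Pas89} (for the valued-field quantifier elimination) and Presburger \cite{Presb29} (for the Ord-sort), so the only fair comparison is between your attempt and those original arguments. Your outline does identify the correct global strategy -- dispose of the Ord-sort quantifiers by Presburger elimination (legitimate, since the congruence predicates $\equiv_n$ are in $\mathbf{L}_{\mathrm{Ord}}$, and the Res-sort quantifiers may stay inside $\psi$), then eliminate a single existential Val-sort quantifier via a saturation/back-and-forth criterion together with Henselianity. This is indeed the shape of the classical proof.

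However, as a proof it has a genuine gap: the decisive step is described rather than carried out, and at the one point where you do carry it out the formulation is off. In the transcendental case, ``saturation combined with compatibility of residue-field and value-group data produces such an $a'$'' presupposes exactly what must be shown, namely that the quantifier-free type of $a$ over $A$ is already determined by the values $\ord f(a)$ and $\ac f(a)$ for polynomials $f$ over $A$; establishing this (via immediate extensions, pseudo-Cauchy sequences, or Pas's degree induction) is the heart of the matter and is nowhere argued. In the algebraic case, Hensel's lemma does not directly ``lift a compatible residue-field/value-group root'': one must first reduce to a situation where the relevant polynomial has a simple residue root after an explicit change of variable controlled by $\ord$ and $\ac$ data, and this reduction -- showing that the conditions $\ord f_i(x)=\cdot$, $\ac f_i(x)=\cdot$ for a suitable finite family of polynomials pin down the existence of a root purely in terms of Res- and Ord-sort conditions -- is precisely the technical core you defer to \cite{Pas89} with the phrase ``a careful induction on polynomial degree.'' Note also that the target of that induction is not the rewriting of an ``atomic Val-sort condition'' (an atomic condition is just a polynomial equation and needs no rewriting); what must be rewritten is $\exists x\,\theta(x)$ for $\theta$ a conjunction of $\ord$/$\ac$ conditions, which is a materially harder statement. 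As it stands, your text is a correct roadmap of Pas's theorem, but the theorem itself is invoked rather than proved at the crucial step.
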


\subsection{Definable subassignments}

From now on we will work with the Denef-Pas language enriched with constant symbols in the $\mathrm{Val}$-sort and in the $\mathrm{Res}$-sort for elements of $k(\!(t)\!)$, resp.~of $k$, and we will denote this language also by $\mathcal{L}_{DP,P}$.

\subsubsection{Definable subassignments and definable morphisms}
Let $\varphi$ be a formula respectively in $m$, $n$ and $r$ free variables in the various sorts. For every field $K$ in
$\mathrm{Field}_{k}$, we denote by $h_{\varphi}(K)$ the subset of
$$h[m,n,r](K):=K(\!(t)\!)^{m}\times K^{n} \times \mathbb Z^{r}$$
consisting of points satisfying $\varphi$. The assignment $K\mapsto h_{\varphi}(K)$ is called a
\emph{definable subassignment} or \emph{definable set}. For instance we will denote by $\{*\}$ the definable subassignment $h[0,0,0]$ defined by $K \mapsto \mathrm{Spec}\: K$.
A \emph{definable morphism} $F$ between two definable subassignments $h_{\varphi}$
and $h_{\psi}$ is a collection of applications parametrized by $K$ in $\mathrm{Field}_{k}$ $$F(K):h_{\varphi}(K)\ra h_{\psi}(K)$$ such that the graph map
$K\mapsto \mathrm{Graph}F(K)$ is a definable subassignment. Definable subassignments and definable morphisms are precisely objects
and morphisms of the category of definable subassignments over $k$ denoted by $\mathrm{Def}_{k}$.
More generally, for any definable subassignment $S$ in $\Def_{k}$, we will consider the category $\Def_S$ of definable subassignments over $S$ whose objects are definable morphisms
$\theta_{Z}$ in $\Def_{k}$ from a definable $Z$ to $S$ and morphisms are definable maps $g:Y\ra Z$ such that
$\theta_Y = \theta_Z \circ g$.
Sometimes, instead of using $\theta_Z$, we will simply say that $Z$ is a definable set in $\Def_S$.

\subsubsection{Finiteness of some definable functions}
We deduce from theorem \ref{QE} on quantifier elimination the following corollary 
\begin{cor} \label{fini}
For non negative integers $m$,$n$ and $r$, every definable map from $h[0,n,0]$ to $h[m,0,r]$ or from $h[0,0,r]$ to $h[m,n,0]$ or from $h[0,n,r]$ to $h[m,0,0]$ takes finitely many values.
\end{cor}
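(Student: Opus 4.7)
My plan is to deduce the corollary directly from Theorem~\ref{QE} combined with a rigidity principle for the valued field sort. I treat the first case $F \colon h[0,n,0] \to h[m,0,r]$ in detail; the other two are strictly analogous.

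The graph of $F$ is a definable subassignment of $h[m,n,r]$, given by a formula $\phi(y,\xi,\alpha)$. By Theorem~\ref{QE}, applied in the language enriched with constants from $k(\!(t)\!)$ and $k$, $\phi$ is equivalent to a finite disjunction
\[
\bigvee_{i=1}^{N} \psi_{i}\bigl(\ac f^{(i)}(y),\xi\bigr) \wedge \eta_{i}\bigl(\ord f^{(i)}(y),\alpha\bigr),
\]
with polynomials $f^{(i)}_{j} \in k(\!(t)\!)[y]$. Partitioning the domain into finitely many definable pieces according to which disjunct describes the graph reduces the problem to a single $\phi_{i}$.

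The crucial observation is that $\phi_{i}$ depends on $y$ only through the residue field values $\ac f^{(i)}_{j}(y)$ and value group values $\ord f^{(i)}_{j}(y)$. I claim that whenever the set $\{y : \phi_{i}(y,\xi,\alpha)\}$ (with $\xi,\alpha$ fixed) is a singleton $\{y_{0}\}$, then $y_{0}$ is an isolated common zero of the subfamily $\{f^{(i)}_{j} : f^{(i)}_{j}(y_{0})=0\}$. Indeed, if the common zero locus had positive dimension at $y_{0}$, a standard Hensel-type argument over the Henselian field $K(\!(t)\!)$ would produce infinitely many points of that locus of the form $y_{0} + t^{N} z$, with $N$ large and $z$ tangent to the locus at $y_{0}$; such perturbations preserve the vanishing of each $f^{(i)}_{j}$ with $j$ in our subfamily, and for $N$ larger than $\max \ord f^{(i)}_{j}(y_{0})$ they also preserve the leading term, hence the $\ac$ and $\ord$, of every other $f^{(i)}_{j}$. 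These perturbations would then satisfy $\phi_{i}$, contradicting uniqueness.

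Since there are only finitely many subfamilies to consider and each zero-dimensional $k(\!(t)\!)$-affine variety has cardinality uniformly bounded over any $K \in \Field_{k}$ (by Bezout in the degrees of the $f^{(i)}_{j}$), the Val coordinate $y(\xi)$ of $F(\xi)$ ranges in a finite set. The Ord coordinate is then forced to be the unique $\alpha$ satisfying $\eta_{i}(\ord f^{(i)}(y(\xi)),\alpha)$, so it takes only finitely many values too, and the image of $F$ is finite. The other two cases follow by the same argument: in all three, the absence of Val variables in the domain combined with the rigidity forced by QE confines the Val output to a finite set, after which the Res (respectively Ord) output is determined via the singleton condition on $\psi_{i}$ (respectively $\eta_{i}$). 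I expect the main technical obstacle to be the Hensel-type lifting when the common zero locus is singular at $y_{0}$, which should be routine in residue characteristic zero but requires some care to phrase uniformly in $K$.
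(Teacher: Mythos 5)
Your overall strategy---apply Theorem \ref{QE} to the graph, note that the resulting formula constrains the valued-field output $y$ only through the values $\ac f^{(i)}_j(y)$ and $\ord f^{(i)}_j(y)$, and conclude that a singleton fiber of $y\mapsto(\ac f^{(i)}_j(y),\ord f^{(i)}_j(y))_j$ confines the Val coordinate to a finite set, after which the Ord (resp.\ Res) coordinate is pinned down---is exactly the intended deduction; the paper itself states the corollary as an immediate consequence of Theorem \ref{QE} and gives no further details. But there is a genuine gap at the step you yourself flagged, and it is not a matter of technical care: your claim that a singleton fiber forces $y_0$ to lie on a zero-dimensional component of the common zero locus $V$ of the vanishing subfamily rests on the assertion that positive dimension of $V$ at $y_0$ yields, by Hensel-type lifting, infinitely many rational points of $V$ of the form $y_0+t^Nz$ near $y_0$. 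That assertion holds at smooth points of $V$ but fails at singular points, because $K(\!(t)\!)$ is not algebraically closed: for $V\colon y_1^2-t\,y_2^2=0$ the locus is a curve, yet its only $K(\!(t)\!)$-rational point is the origin (compare orders: $2\ord y_1=1+2\ord y_2$ is impossible), so the origin is isolated in $V(K(\!(t)\!))$ while lying on no zero-dimensional component. Residue characteristic zero does not help. Hence your Bezout count, which only sees zero-dimensional components, does not bound the possible values $y_0$, and the proof as written is incomplete.

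What your reduction does establish is that a singleton fiber forces $y_0$ to be isolated in the set of rational points $V(K(\!(t)\!))$ for the valuation topology; what is missing is the fact that, for a fixed variety $V$ over $k(\!(t)\!)$, such isolated rational points form a finite set, bounded uniformly in $K$. One way to get this is Noetherian induction on the singular locus: an isolated point of $V(K(\!(t)\!))$ either lies on a zero-dimensional component of $V$ (where your degree bound applies) or is a smooth point of positive local dimension (excluded by your Hensel argument, which is valid there) or lies in $V_{\mathrm{sing}}$, where it is again isolated in $V_{\mathrm{sing}}(K(\!(t)\!))$; since $V_{\mathrm{sing}}$ is defined over $k(\!(t)\!)$ and of smaller dimension, iterating reduces everything to finitely many zero-dimensional varieties. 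Alternatively one can avoid the pointwise analysis altogether and invoke the dimension theory of Pas and van den Dries used in \cite{CluLoe08a}: the image of the map is a definable subassignment of $K$-dimension $0$, so its Zariski closure, hence the image itself, is finite. With either repair, your handling of the remaining Ord and Res coordinates and the uniformity in $K$ go through as you wrote them.
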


\subsubsection{Points and fibers}
A \emph{point} $x$ of a definable set $X$ is by definition a couple $x=(x_{0},K)$ where $K$ is an extension of $k$ and $x_{0}$ is a point of $X(K)$. The field $K$ will be denoted by $k(x)$ and called
\emph{residue field} of $x$.

Let $f$ be a definable morphism from a subassignment of $h[m,n,r]$ denoted by  $X$ to a subassignment of $h[m',n',r']$ denoted by $Y$. Let $\varphi(x,y)$ be the formula which describes the graph of $f$, where $x$ runs over $h[m,n,r]$ and $y$ runs over $h[m',n',r']$.
For every point $y=(y_{0},k(y))$ of $Y$, the \emph{fiber} $X_y$ is the object of $\Def_{k(y)}$ defined by the formula $\varphi(x,y_0)$ which has coefficients in $k(y)$ and $k(y)(\!(t)\!)$. Taking fibers at $y$ gives rise to a functor $i_{y}^{*}:\Def_{Y}\ra \Def_{k(y)}$.

\subsubsection{Dimension}
For any positive integer $m$, an algebraic subvariety $Z$ of $\A_{k(\!(t)\!)}^{m}$ induces a definable subassignment $h_{Z}$ of $h[m,0,0]$ with $h_{Z}(K)$ equal to $Z(K(\!(t)\!))$ for any extension $K$ of $k$.
The \emph{Zariski closure} of a subassignment $S$ of $h[m,0,0]$ is by definition the subassignment of the intersection $W$ of all algebraic subvarieties $Z$ of $\A_{k(\!(t)\!)}^{m}$ such that $h_{Z}$ contained $S$. The \emph{dimension} $\Kdim S $ of $S$ is naturally defined as $\dim W$.
More generally, the dimension of a subassignment $S$ of $h[m,n,r]$ is defined as the dimension $\Kdim p(S)$ where $p$ is
the projection from $h[m,n,r]$ to $h[m,0,0]$.\\  It is  proved in \cite{CluLoe08a},
using results of Pas \cite{Pas89} and van den Dries \cite{VDDries89}, that
 isomorphic definable subassignments in $\Def_{k}$ have the same dimension.

\subsection{Grothendieck rings and exponentials}

\subsubsection{The category $\RDef_{k}^{\:\exp}$}
For any definable subassignment $Z$ in $\Def_{k}$,
the  subcategory $\RDef_Z$ of $\Def_{Z}$ whose
objects are definable morphisms $\pi_Y$, with $Y$ a subassignment of a product
$Z\times h[0,n,0]$, $n$ a non negative integer and $\pi_Y$ the canonical projection on $Z$,
has been introduced in \cite{CluLoe08a}.

\begin{example}
If $Z$ is the point $h[0,0,0]$, then the subcategory $\RDef_Z$ is the category of definable sets in the ring language with coefficients from $k$.
\end{example}

More generally, in \cite{CluLoe10a} motivic additive characters were considered in this context through the category $\RDef_{Z}^{\:\exp}$ whose objects are triples $(\pi_Y,\xi, g)$ with $\pi_Y$ a definable set in $\RDef_{Z}$, $\xi$ a definable morphism from $Y$ to $h[0,1,0]$ and $g$ a definable morphism from $Y$ to $h[1,0,0]$. A morphism from $(\pi_{Y'},\xi', g')$ to $(\pi_Y,\xi, g)$ in
$\RDef_{Z}^{\:\exp}$ is a morphism $h$ from $Y'$ to $Y$ satisfying the equalities
$$\pi_{Y'}=\pi_{Y}\circ h,\:\: \xi'=\xi \circ h,\:\:g'= g \circ h.$$

\begin{rem}\label{rem:1.4}The functor $\pi_Y \mapsto (\pi_Y,0,0)$
allows to identify
 $\RDef_{Z}$ as  a full subcategory of $\RDef_{Z}^{\:\exp}$. \end{rem}

\subsubsection{The Grothendieck ring $K_{0}(\RDef_{Z}^{\:\exp})$}  \label{exponential}
As an abelian group it is the free abelian group over symbols $[\pi_Y,\xi,g]$
modulo the following relations :

 \emph{Isomorphism}.
 For any isomorphic $(\pi_Y,\xi, g)$ and $(\pi_{Y'},\xi', g')$,
 \begin{equation} \tag{R1}
  [\pi_Y,\xi,g]=[\pi_{Y'},\xi',g']
 \end{equation}

\emph{Additivity}.
 For $\pi_Y$ and $\pi_{Y'}$ definable subassignments of some $\pi_X$ in $\RDef_{Z}$ and for $\xi$ and $g$ defined on $Y\cup Y'$
 \begin{equation} \tag{R2}
 [\pi_{Y\cup Y'},\xi,g]+[\pi_{Y\cap Y'},\xi_{\mid Y \cap Y'},g_{\mid Y \cap Y'}]
 =[\pi_{Y},\xi_{\mid Y},g_{\mid Y}]+[\pi_{Y'},\xi_{\mid Y'},g_{\mid Y'}].\end{equation}

 \emph{Compatibility with reduction}.
 For any $\pi_Y$ in $\Def_Z$, for any definable morphism $f$ from $Y$ to $h[1,0,0]$ with $\ord f(y) \geq 0$ for any $y\in Y$  we have
 \begin{equation} \label{R3} \tag{R3}
  [\pi_Y,\xi,g+f]=[\pi_Y,\xi+\overline{f},g]
 \end{equation}
 with $\overline{f}$ the reduction
 of $f$ modulo $(t)$.

 \emph{Sum over the line}.
 Let $p$ be the canonical projection from $Y[0,1,0]$ to $h[0,1,0]$. If the morphisms $\pi_{Y[0,1,0]}$, $g$ and $\xi$ all factorize through the canonical projection from $Y[0,1,0]$ to $Y$ then
 \begin{equation} \label{R4} \tag{R4}
 [Y[0,1,0]\ra Z,\xi+p,g]=0.
 \end{equation}

 This Grothendieck group is endowed with a ring structure by setting
\begin{equation} \label{R5} \tag{R5}
 [\pi_Y,\xi,g] \cdot [\pi_{Y'},\xi',g']=[\pi_{Y\otimes_Z Y'},\xi\circ p_Y+\xi'\circ p_Y',g\circ p_Y+g'\circ p_Y']
\end{equation}
where $Y\otimes_Z Y'$ is the fiber product of $Y$ and $Y'$ above $Z$, $p_Y$ is the projection to $Y$ and $p_{Y'}$ is the projection to $Y'$. The element $[\Id_Z,0,0]$ is the multiplicative unit of $K_{0}(\RDef_Z^{\:\exp})$.
The Grothendieck ring $K_{0}(\RDef_Z)$ is defined as above and the functor defined in Remark \ref{rem:1.4} induces an injection $K_{0}(\RDef_Z) \to K_{0}(\RDef_Z^{\:\exp})$.

\begin{rem}
The element $[\pi_Y,\xi, g]$ of the Grothendieck ring $K_{0}(\RDef_{Z}^{\:\exp})$ will be denoted by $e^{\xi}E(g)[\pi_Y]$.
We will abbreviate $e^{0}E(g)[\pi_Y]$ by $E(g)[\pi_Y]$, $e^{0}E(0)[\pi_Y]$ by $[\pi_Y]$ and $e^{0}E(g)[\Id_Z]$ by $E(g)$.
\end{rem}

\begin{rem}[Interpretation of $E$] The element $E(g)$ in $K_{0}(\RDef_{Z}^{\:\exp})$ can be viewed as the exponential  (at the valued field level of the definable morphism $g$ from $Z$ to $h[1,0,0]$, said otherwise, it is a motivic additive character on the valued field evaluated in $g$. More precisely, by relations (\ref{R3}) and (\ref{R5}), $E$ can be interpreted as a universal additive character  which is trivial on the maximal ideal of the valuation ring. This is compatible with specialization to $p$-adic fields as explained in  Section 9 of \cite{CluLoe10a}.
\end{rem}

\begin{rem}[Interpretation of $e$]
 The element $e(\xi)$ in $K_{0}(\RDef_{Z}^{\:\exp})$ can be considered as the exponential (at the residue field level) of the definable morphism $\xi$ from $Z$ to $h[0,1,0]$.  By relation (\ref{R4}), $e$ can  be interpreted as a universal additive character on the residue field. For instance in the case where $Z$ is the point, the relation $[h[0,1,0]\ra \{*\},p,0] = 0$ should be interpreted as an abstraction of the classical nullity of the sum of a non trivial character over elements of a finite field. Relation (\ref{R3}) expresses compatibility under reduction modulo the uniformizing parameter between the exponentials over the valued field and over the residue field.
\end{rem}

\subsubsection{Pull-back and push-forward} \label{Res-pull-back-push-forward}
For $f:Z\to Z'$ in $\Def_k$ there is a natural pull-back morphism $f^*:K_0(\RDef_{Z'}^{\:\exp}) \to K_0(\RDef_{Z}^{\:\exp})$, induced by the fiber product. Furthermore, if $f$ is a morphism in $\RDef_{Z'}$, then composition with $f$ induces a morphism 
$f_{!}:K_0(\RDef_{Z}^{\:\exp}) \to K_0(\RDef_{Z'}^{\:\exp})$.

\subsection{Constructible exponential functions}

\subsubsection{Constructible motivic functions}
In the $p$-adic context (see \cite{Denef84}, \cite{Igusabook}, \cite{CluLoe10a} and \cite{CluGorHal14a}), for instance over the field $\mathbb Q_{p}$ itself, one fixes an additive character $\Psi : K \ra \mathbb C^\times$ trivial on $p\mathbb Z_p$ and non trivial on the set $\ord x =0$ and one denotes by $\mathbb A_{p}$ the ring
$\mathbb Z[1/p, 1/(1-p^{-i})]$.
For any $X$ contained in some $\mathbb Q_p^m$ and definable for the Macintyre language, it is natural to define the $\mathbb A_{p}$-algebra of constructible functions on $X$ denoted by $\mathcal C(X)$ and generated by function of the form
 $\abs{f}\ord(h)$ where $f$ and $h$ are definable functions from $X$ to $\mathbb Q_p$ and $h$ does not vanish. In \cite{CluLoe10a}, also a variant with additive characters is introduced, called constructible exponential functions on $X$ and denoted by $\mathcal C(X)^{\exp}$. The algebra $\mathcal C(X)^{\exp}$ is generated by $\mathcal C(X)$  and functions of the form $\psi(g)$ with $g:X\to\Qp$ with $\psi$ a nontrivial additive character on $\Qp$.

Analogously, in \cite{CluLoe08a} Cluckers and Loeser consider the ring
$$\mathbb A=\mathbb Z\left[\L,\L^{-1},\left(\frac{1}{1-\L^{-i}}\right)_{i>0}\right],$$
where $\L$ is a symbol, and they define the ring $\mathcal C(Z)$ of
\emph{constructible motivic functions} on a definable set $Z$ by
$$\mathcal C(Z):=K_{0}(\RDef_{Z}) \otimes_{\mathcal{P}^{0}(Z)} \mathcal{P}(Z),$$
where $\mathcal P(Z)$, called ring of \emph{Presburger constructible functions}, is the subring of the ring of functions from the set of points of $Z$ to $\mathbb A$, generated
by constant functions, definable functions from $Z$ to $\mathbb Z$ and functions of the form $\L^{\beta}$ with
$\beta$ a definable function from $Z$ to $\mathbb Z$. Here, $\mathcal{P}^{0}(Z)$ is the subring of $\mathcal P(Z)$ generated by the constant function $\L$ and the characteristic functions $1_{Y}$ of definable subsets $Y$ of the base $Z$.
The tensor product is given by the morphism from $\mathcal{P}^{0}(Z)$ to $K_{0}(\RDef_{Z})$ sending $1_{Y}$ to the class $[Y \ra Z]$
of the canonical injection from $Y$ to $Z$ and sending $\L$ to the class $[Z[0,1,0]\ra Z]$ of the canonical projection to $Z$.

The following proposition \cite[Proposition 5.3.1]{CluLoe08a} allows to dissociate the $\text{Res}$-sort with the value group sort.
\begin{prop} \label{prop:dissociation}
	Let $S$ be a definable subassignment of $h[0,n,0]$.
	\begin{itemize}
		\item Let $W$ be a definable subassignment of $h[0,n,0]$. The canonical morphism
			$$\mathcal P(S) \otimes_{\mathcal P^{0}(S)} K_{0}(\RDef_{S\times W}) \ra \mathcal C(S \times W) $$
			is an isomorphism.
		\item Let $W$ be a definable subassignment of $h[0,0,r]$. The canonical morphism 
			$$K_{0}(\RDef_S)\otimes_{\mathcal P^{(0)}(S)} \mathcal P(S\times W) \ra \mathcal C(S \times W)$$
			is an isomorphism.
	\end{itemize}
\end{prop}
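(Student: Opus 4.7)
The two parts share a common theme: in each case $S\times W$ avoids the valued-field sort, so Denef--Pas quantifier elimination (Theorem \ref{QE}) and Corollary \ref{fini} together force definable objects on $S\times W$ to be built out of pieces living on $S$ and on $W$ separately. The plan is to identify both sides of the claimed isomorphism with a common object obtained by extension of scalars, making the canonical map into the resulting identification.

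\textbf{Part (1).} Since $S,W\subseteq h[0,n,0]$, the set $S\times W$ is purely residue-field. Corollary \ref{fini} then guarantees that every definable map from $S$ or from $S\times W$ to $\Z$ takes only finitely many values, so each Presburger generator $\beta$ or $\L^\beta$ decomposes as an $\mathbb A$-linear combination of characteristic functions. I would deduce from this that for such purely residue-field $Z$ the natural map $\mathcal P^0(Z)\otimes_{\Z[\L]}\mathbb A\to\mathcal P(Z)$ is an isomorphism. Substituting this description of $\mathcal P(S)$ and $\mathcal P(S\times W)$ into both sides of part (1) makes them both canonically isomorphic to $\mathbb A\otimes_{\Z[\L]}K_0(\RDef_{S\times W})$, and the canonical map is this identification. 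The only non-trivial check is the injectivity of the auxiliary map, which is a direct verification from the presentations.

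\textbf{Part (2).} Here $W\subseteq h[0,0,r]$ lies purely in the value-group sort. First I would reduce to a $K_0$-level statement: by associativity of tensor product, if $K_0(\RDef_S)\otimes_{\mathcal P^0(S)}\mathcal P^0(S\times W)\to K_0(\RDef_{S\times W})$ is an isomorphism of $\mathcal P^0(S\times W)$-algebras, then tensoring over $\mathcal P^0(S\times W)$ with $\mathcal P(S\times W)$ yields the claim. For this $K_0$-level iso, fix any $Y\subseteq S\times W\times h[0,m,0]$; Theorem \ref{QE} applied in the absence of valued-field variables expresses the defining formula as a boolean combination of ``rectangles'' $\psi(\xi_S,\xi_{h[0,m,0]})\wedge\eta(\alpha_W)$, and a standard partitioning yields a finite disjoint decomposition $Y=\bigsqcup_j A_j\times B_j$ with $A_j\subseteq S\times h[0,m,0]$ residue and $B_j\subseteq W$ value-group. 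Surjectivity is immediate, since $\sum_j[A_j\to S]\otimes 1_{B_j}$ maps to $[Y\to S\times W]$.

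The hard part will be injectivity, which I would establish by explicitly defining the inverse $[Y\to S\times W]\mapsto\sum_j[A_j\to S]\otimes 1_{B_j}$ and proving it well-defined. Independence of the rectangular partition is handled by a common refinement using $(A\times B)\cap(A'\times B')=(A\cap A')\times(B\cap B')$. The compatibility with the isomorphism relation in $K_0(\RDef_{S\times W})$ is the genuine obstacle, because a definable isomorphism between two unions of rectangles over $S\times W$ need not preserve the product structure. The plan is to use once more Theorem \ref{QE} and Corollary \ref{fini} to show that any such isomorphism, after further refinement of the partitions, factors as an isomorphism of residue-sort pieces together with an identification of value-group pieces; once this is in hand the two candidate maps are mutually inverse.
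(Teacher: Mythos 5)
The paper does not actually prove Proposition \ref{prop:dissociation}: it is imported verbatim from \cite[Proposition 5.3.1]{CluLoe08a}, so there is no in-paper argument to compare yours against. That said, your outline is essentially the standard orthogonality argument behind the Cluckers--Loeser proof, and it is sound. In part (1), the identification $\mathcal P^{0}(Z)\otimes_{\Z[\L]}\mathbb A\simeq\mathcal P(Z)$ for purely residue-sort $Z$ follows as you say from Corollary \ref{fini} (finite range of definable maps to $\Z$) plus disjointification of the supports to get injectivity, and then both sides of the statement collapse to $\mathbb A\otimes_{\Z[\L]}K_{0}(\RDef_{S\times W})$ compatibly with the canonical map. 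In part (2), the reduction by associativity of the tensor product to the $K_{0}$-level statement is legitimate, and the step you single out as the genuine obstacle does go through: given a definable isomorphism $f:Y\to Y'$ over $S\times W$, apply Theorem \ref{QE} (in the enriched language with constants, and with no valued-field variables) to the graph of $f$; it is a finite union of rectangles, so after passing to the atoms $B_j$ of the Boolean algebra generated by the value-group factors, the fibre of the graph over $w$ is constant on each $B_j$. Hence $Y$, $Y'$ and $f$ are all of the form $Y_j\times B_j$, $Y'_j\times B_j$, $f_j\times\mathrm{Id}_{B_j}$ over each atom, with $f_j:Y_j\to Y'_j$ an isomorphism in $\RDef_S$, which is exactly the well-definedness of your inverse on the isomorphism relation; compatibility with the additivity relation and with the $\mathcal P^{0}(S\times W)$-module structure is the same bookkeeping. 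So the proposal is correct in outline, with only routine verifications left implicit, and it coincides in substance with the argument in \cite{CluLoe08a} rather than offering a genuinely different route.
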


\subsubsection{Constructible exponential functions} \label{def:cef}
For any definable set $Z$ in $\Def_{k}$, the ring $\mathcal C(Z)^{\:\exp}$ of \emph{constructible exponential functions} is defined in
\cite{CluLoe10a} by
$$\mathcal C(Z)^{\:\exp}:=\mathcal C(Z)\otimes_{K_{0}(\RDef_{Z})}K_{0}(\RDef_{Z}^{\:\exp}),$$
where we use the morphism $a \mapsto a \otimes 1_Z$ from $K_{0}(\RDef_{Z})$ to $\mathcal C(Z)$.
For any integer $d$, we denote by $\mathcal C^{\leq d}(Z)^{\:\exp}$ the ideal of \emph{constructible functions of $K$-dimension $\leq d$}, namely the ideal generated by the characteristic functions $1_{Z'}$ of subassignments $Z'$ of $Z$ of dimension at most $d$. A constructible function has \emph{$K$-dimension $d$}, if it belongs to $\mathcal C^{\leq d}(Z)^{\:\exp} \setminus \mathcal C^{\leq d-1}(Z)^{\:\exp}$. 
This family of ideals is a filtration of the ring $\mathcal C(Z)^{\:\exp}$ and
the graded ring associated
$$C(Z)^{\:\exp} = \oplus_{d\in \mathbb N}\:\mathcal C^{\leq d}(Z)^{\:\exp}/\mathcal C^{\leq d-1}(Z)^{\:\exp}$$
is called ring of \emph{constructible exponential Functions}.

\begin{rem}
 Constructible Functions can be compared to the equivalence classes of Lebesgue measurable functions (equality up to a zero measure set). In this article we will just write function for Function; the difference still being visible in the notation $C(Z)^{\:\exp}$ versus $\mathcal C(Z)^{\:\exp}$.
\end{rem}

 \subsection{Cell decomposition}  \label{celldecomppart}

  Let $C$ be a definable subassignment in $\Def_{k}$. Let $\alpha : C \ra \Z$, $\xi : C\ra h[0,1,0]\setminus \{0\}$ and $c:C\ra h[1,0,0]$ be definable morphisms.\\

  $\bullet$ The cell $Z_{C,c,\alpha,\xi}$  with basis $C$, center $c$, order $\alpha$ and angular component $\xi$, is
  $$
  Z_{C,c,\alpha,\xi} =
  \left\{
  (y,z) \in C[1,0,0]\:
  \begin{array}{|l}
   \ord(z-c(y))=\alpha(y) \\
   \ac(z-c(y))=\xi(y)
  \end{array}
  \right\}
  $$
Note that this definable set is a family of balls $B(c(y)+\xi(y)t^{\alpha(y)},\alpha(y)+1)$ parametrized by the base $C$. The axiom (Axiom \ref{volume-boule}), below gives the push-forward morphism corresponding to the projection of this cell on its base $C$, that is, integration in the fibers of this projection map.\\

$\bullet$ The cell $Z_{C,c}$ with basis $C$ and center $c$ is
$$
  Z_{C,c} =
  \left\{
  (y,z) \in C[1,0,0]\mid z=c(y)
  \right\}.
  $$
  The change of variables formula (theorem \ref{cov}) gives in particular, the push-forward morphism corresponding to the projection of that cell on its base.\\

More generally, a definable subassignment $Z$ of $S[1,0,0]$ for some $S$ in $\Def_{k}$ is called a \emph{$1$-cell} or a \emph{$0$-cell} if there exists a definable isomorphism
$$\lambda : Z\ra Z_{C,c,\alpha,\xi}\subset S[1,s,r]\:\:\mathrm{or}\:\: \lambda : Z\ra Z_{C,c}\subset S[1,s,0],$$
called \emph{presentation} of the cell $Z$,
where the base $C$ is contained in $S[0,s,r]$ and such that the morphism $\pi \circ \lambda$  is the identity on $Z$ with $\pi$ the projection to $S[1,0,0]$.

Let us state  a variant of Denef-Pas Cell Decomposition theorem \cite{Pas89}, theorem 7.2.1 of \cite{CluLoe10a}, that will be used in the proof of the projection lemma \ref{projlem} in subsection \ref{cas-sans-exp}.

\begin{thm}[Cell decomposition]  \label{celldecompthm}
 Let $X$ be a definable subassignment of $S[1,0,0]$ with $S$ in $\Def_{k}$.
 \begin{enumerate}
  \item The definable set $X$ is a finite disjoint union of cells.
  \item For every $\varphi$ in $\mathcal C(X)$ there exists a finite partition of $X$ into cells $Z_{i}$ with presentation
  $(\lambda_{i},Z_{C_{i}})$ such that
  $\varphi_{\mid Z_{i}}=\lambda_{i}^{*}p_{i}^{*}(\psi_{i}),$ with
  $\psi_{i}$ in $\mathcal C(C_{i})$ and $p_{i}:Z_{C_{i}} \ra C_{i}$ the projection. Similar statements hold for $\varphi$
  in $\mathcal P(X)$ and in $K_{0}(\RDef_{X})$.
 \end{enumerate}
\end{thm}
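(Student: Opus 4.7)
My strategy is to follow the Denef--Pas approach: first use quantifier elimination (theorem \ref{QE}) to reduce the defining formula of $X$ to a Boolean combination of conditions on angular components and orders of polynomials in the valued-field variable, and then organise the combinatorics by producing, parametrically in the base, a center with Hensel's lemma.

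For part (1), I would write $X$ (inside $S[1,0,0]$, with valued-field variable $z$) using QE as a disjunction of formulas of the form $\psi(\ac f_i(y,z),\xi)\land \eta(\ord f_i(y,z),\alpha)$ with $f_i\in\Z[y,z]$. Partitioning $S$ according to the combinatorics of the roots of the $f_i$ in an algebraic closure of $k(y)\llp t\rrp$ (number of roots, their relative orders, their angular components), one obtains on each piece a definable center $c(y)$ and definable data $(\alpha(y),\xi(y))$ such that every $\ord f_i(y,z)$ is an affine function of $\ord(z-c(y))=\alpha(y)$ and each $\ac f_i(y,z)$ is a polynomial function of $\ac(z-c(y))=\xi(y)$ and $y$-parameters. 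This produces a finite disjoint union of $1$-cells $Z_{C,c,\alpha,\xi}$ together with $0$-cells $Z_{C,c}$ that absorb the lower-dimensional loci where $z=c(y)$.

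For part (2), I would use the description $\mathcal C(X)=K_0(\RDef_X)\otimes_{\mathcal P^0(X)}\mathcal P(X)$ and treat the two factors separately. If $\varphi\in\mathcal P(X)$, it is a sum of products of a constant, a $\Z$-valued definable function $\beta$, and an $\L^{\gamma}$; by QE, $\beta$ and $\gamma$ are expressible in terms of $\ord f_j(y,z)$, so after refining the cell decomposition of part (1) with the polynomials $f_j$ added to the data, both $\beta$ and $\gamma$ become functions of $(y,\alpha(y))$ alone on each cell, and hence pull back from $C_i$. For $\varphi=[\pi_Y]\in K_0(\RDef_X)$ with $Y\subset X[0,n,0]$, the same machinery applied to the formula cutting out $Y$ in $S[1,n,0]$ allows us to eliminate $z$ from the residue-field formula on each refined cell, exhibiting $Y\vert_{Z_i}$ as $(\lambda_i\circ p_i)^{-1}$ of a definable set over $C_i$. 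The exponential variant, although not requested here, would follow by adding the datum of the morphisms $\xi$ and $g$ of section \ref{exponential} to the cell decomposition.

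The main obstacle is really part (1): producing the center $c$ and the auxiliary data $(\alpha,\xi)$ \emph{definably and uniformly in} $y\in S$. This ultimately rests on Hensel's lemma for the theory $H_{\ac,0}$ and on Presburger quantifier elimination, which guarantees that the Newton-polygon combinatorics of the roots of the $f_i$ is itself a definable partition of the base. Once part (1) is available, part (2) is largely formal: it amounts to rerunning the cell decomposition while carrying along the finitely many extra formulas defining the ingredients of $\varphi$, and then invoking corollary \ref{fini} to see that the resulting residue-field and value-group dependencies factor through the base $C_i$.
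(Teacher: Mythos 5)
First, a point of comparison: the paper does not prove this statement at all --- it is quoted as a known result (the Denef--Pas cell decomposition theorem of \cite{Pas89}, in the form of theorem 7.2.1 of \cite{CluLoe10a}), so there is no in-paper argument to match. Your sketch does follow the standard Denef--Pas strategy (quantifier elimination, preparation of the polynomials $f_i$, centers via Hensel's lemma), so the overall route is the right one; the question is whether the sketch actually closes the hard step, and it does not.

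The genuine gap is in part (1), and it is twofold. First, ``partitioning $S$ according to the combinatorics of the roots of the $f_i$ in an algebraic closure of $k(y)\llp t\rrp$'' is not a definable operation as stated: the roots live in algebraic extensions and are not terms of the language, and the assertion that on each piece one can choose \emph{one} definable center $c(y)$ over the original base such that every $\ord f_i(y,z)$ is affine in $\ord(z-c(y))$ and every $\ac f_i(y,z)$ is a polynomial in $\ac(z-c(y))$ is precisely the content of the Denef--Pas preparation theorem, whose proof is an induction on the degree in $z$ using Hensel's lemma (Cohen's method); your sketch assumes this conclusion rather than proving it. Second, the statement you are proving is carefully formulated with presentations $\lambda: Z \to Z_{C,c,\alpha,\xi}\subset S[1,s,r]$ whose bases $C$ sit in $S[0,s,r]$, i.e.\ with \emph{auxiliary residue-field and value-group variables} added; this is unavoidable, because in general no single center definable over $S$ alone works and one must parametrize finitely many branches (or families of approximate roots) by extra $\mathrm{Res}$- and $\mathrm{Ord}$-coordinates. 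Your part (1) produces cells fibered directly over pieces of $S$ and never introduces these auxiliary variables, so as written it proves a stronger statement that is false in general. Part (2) is fine in spirit (refine the decomposition by the finitely many formulas entering $\varphi$, then use corollary \ref{fini} and proposition \ref{prop:dissociation}-type dissociation to factor through $C_i$), but it inherits the same dependence on the parametric form of part (1), so the proposal as it stands does not constitute a proof.
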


\subsection{Pull-back of constructible exponential functions}  \label{inverseimage-exp}
A definable map $f:Z\ra Z'$ in $\Def_{k}$ induces a pull-back morphism (cf.  \cite[\S 5.4]{CluLoe08a}and  in \cite[\S 3.4]{CluLoe10a})
$$f^{*}:\mathcal C(Z')^{\exp} \ra \mathcal C(Z)^{\exp}.$$
Indeed, the fiber product along $f$ induces a pull-back morphism
 from $K_{0}(\RDef_{Z'})^{\exp}$ to $K_{0}(\RDef_Z)^{\exp}$ and the composition by $f$ induces also a pull-back morphism
from $\mathcal{P}(Z')$ to $\mathcal{P}(Z).$ These pull-backs are compatible with their tensor product.

\begin{rem}
 A constructible exponential function $E(g)e(\xi) \otimes \alpha \mathbb L^{\beta}$ can be thought of as
 $$z \in Z \mapsto E(g(z))e(\xi(z)) \otimes \alpha(z) \mathbb L^{\beta(z)} \in \mathcal C(\{z\})^{\exp}.$$
 More generally, the constructible exponential function $[\pi_Y] E(g)e(\xi) \otimes \alpha \mathbb L^{\beta}$ can be thought of as
 $$z \in Z \mapsto [Y_z]E(g_{\mid Y_z})e(\xi_{\mid Y_z}) \otimes \alpha_{\mid Y_z} \mathbb L^{\beta_{\mid Y_z}} \in \mathcal C(\{z\})^{\exp}.$$
 By corollary \ref{fini}, the restrictions $\alpha_{\mid Y_z}$ and $\beta_{\mid Y_z}$ take finitely many values, but $[Y_z]$ should be thought of as a kind of motive standing for a possibly infinite sum over elements in $Y_z$, which is a definable subset of some power of the residue field. With $E$ and $e$, the expression $[\pi_Y] E(g)e(\xi) $ is a kind of exponential motive, standing for possibly infinite exponential sums. In the $p$-adic case, the finiteness of the residue field allows one to see $[Y_z]$ as a finite sum again.
\end{rem}

\subsection{Push-forward of constructible exponential functions}  \label{push forward}
For $S$ in $\Def_{k}$, Cluckers and Loeser construct in
\cite{CluLoe08a} and \cite{CluLoe10a}
a functor $I_{S}^{\exp}$ from the category $\Def_{S}$ to the category $\underline{\mathrm{Ab}}$
 of abelian groups:
 $$  I_{S}^{\exp} \colon
 \begin{cases}
\Def_{S}  \longrightarrow  \underline{\mathrm{Ab}} \\
( \theta_Z : Z \to  S)  \longmapsto   (I_{S}C(\theta_Z)^{\exp} \subset C(Z)^{\exp}) \\
( \theta_Z \overset{f} \to \theta_Y)  \longmapsto  (I_{S}C(\theta_Z)^{\exp}\overset{f!}{\ra} I_{S}C(\theta_Y)^{\exp})
 \end{cases}
$$
satisfying natural axioms implying its uniqueness, see theorems 10.1.1 and 13.2.1 in \cite{CluLoe08a} and
theorem 4.1.1 in \cite{CluLoe10a}.
The elements of  $I_{S}C(\theta_Z)^{\exp}$ are called \emph{$\theta_Z$-integrable motivic constructible exponential functions} on $Z$ or simply \emph{$\theta_Z$-integrable functions}.

\begin{example} The ring $I_{S}C(\Id_S)^{\exp}$ is all of $C(S)^{\exp}$, namely, every function in $C(S)^{\exp}$ is already integrable up to $S$ itself, with the identity map $S\to S$ as structural morphism.
\end{example}

The functor $I_{S}^{\exp}$ and the integrable functions are constructed simultaneously.
The functor $I_{S}$ is first defined in \cite{CluLoe08a} in the setting without exponential and extended in \cite{CluLoe10a} in the exponential setting to $I_{S}^{\exp}$. In particular, for any $\theta_Z:Z \to S$ in
$\Def_{S}$, $I_{S}C(\theta_Z)^{\exp}$ is a graded subgroup of $C(Z)^{\exp}$ defined as
$$I_{S}C(\theta_Z)^{\exp}:=I_{S}C(\theta_Z)\otimes_{K_{0}(\RDef_Z)} K_{0}(\RDef_Z)^{\exp}.$$

\begin{rem} Sometimes we will simply say $S$-integrable instead of $\theta_Z$-integrable and write $I_{S}C(Z)^{\exp}$ when the structural morphism $\theta_Z$ is implicitly clear.
\end{rem}

The natural morphism of graded groups from $I_{S}C(\theta_Z)$ to
$I_{S}C(\theta_Z)^{\exp}$ is injective.
We will use the following axioms  (see theorem 10.1.1 in \cite{CluLoe08a} and \S 13.2 in \cite{CluLoe10a}):

\begin{axiom}[Fubini] \label{axiom-Fubini}
Let $S$ be in $\Def_{k}$. Let $f:\theta_X\ra \theta_Y$ be a definable morphism in $\Def_S$.
A constructible function $\varphi$ on $X$ is $\theta_X$-integrable if and only if $\varphi$ is $f$-integrable and
$f_{!} \varphi$ is $\theta_Y$-integrable namely:
$$\varphi \in I_{S}C(\theta_X)^{\exp} \Leftrightarrow  \varphi \in I_{Y}C(f)^{\exp}\:\:\mathrm{and}\:\: f_{!}\varphi \in I_{S}C(\theta_Y)^{\exp}.$$
\end{axiom}

\begin{axiom}[Additivity] \label{axiom:additivity}
	Let $Z$ be a definable subassignment in $\Def_S$. Assume $Z$ is the disjoint union of two definable subassignments $Z_1$ and $Z_2$. Then, for every morphism 
	$f:Z\to Y$ in $\Def_S$, the isomorphism $C(Z)\simeq C(Z_1)\oplus C(Z_2)$ induces an isomorphism $I_{S}C(Z) \simeq I_{S}C(Z_1)\oplus I_{S}C(Z_2)$ under which we have 
	$f_{!} = f_{1!} \oplus f_{2!}$.
\end{axiom}

\begin{axiom}[Projection formula]  \label{axiom-projection}
Let $S$ be in $\Def_k$. For every morphism $f$ from $\theta_Z$ to $\theta_Y$ in $\Def_S$, and every $\alpha$ in
$\mathcal C(Y)^{\exp}$ and $\beta$ in $I_{S}C(\theta_Z)^{\exp}$, if $f^{*}(\alpha)\beta$ belongs to $I_S C(\theta_Z)^{\exp}$, then $f_{!}(f^{*}(\alpha)\beta)=\alpha f_{!}(\beta)$.
\end{axiom}

\begin{axiom}[Push-forward for inclusions] \label{axiom-push-forwar-inclusion}
Let $S$ be in $\Def_k$. Let $\theta_T:T \to S$ be a definable set in $\Def_S$. Let $Z$ and $Z'$ two definable subassignments of $T$ with $Z\subset Z'$.
Let $i:Z \to Z'$ be the corresponding inclusion and $\theta_Z$ and $\theta_{Z'}$ the corresponding restriction of $\theta_T$ to $Z$ and $Z'$.
We have $\theta_Z = \theta_{Z'} \circ i$.  
Composition with $i$ induces a morphism $i_! : K_0(\RDef_{Z}) \to K_0(\RDef_{Z'})$.
The extension by zero induces a morphism $i_! : \mathcal P(Z) \to \mathcal P(Z')$.
By compatibility with the tensor product, we get a morphism 
\begin{equation}\label{push-forward-injection-cf} 
	i_! : \mathcal C(Z) \to \mathcal C(Z').
\end{equation}
For every constructible function $\varphi$ in $\mathcal C(Z)$, the class $[\varphi]$ lies in $I_SC(\theta_Z)$ if and only if $[i_!(\varphi)]$ belongs to $I_SC(\theta_{Z'})$. If this is the case then $i_!([\varphi])=[i_!(\varphi)]$.

The morphism (\ref{push-forward-injection-cf}) is also compatible with the morphism
$i_!$ from $K_0(\RDef_{Z}^{\exp})$ to $K_0(\RDef_{Z'}^{\exp})$
also defined by extension by zero. We obtain in such a way a morphism 
$i_{!} : \mathcal C(Z)^{\exp} \to \mathcal C(Z')^{\exp}$.
As $i$ sends subassignments of $Z$ to subassignments of $Z'$ of the same dimension, there are group morphisms 
$i_{!}:\mathcal C^{\leq d}(Z)^{\exp} \to \mathcal C^{\leq d}(Z')^{\exp}$ and a graded group morphisms 
$i_{!}:\mathcal C(Z)^{\exp} \to \mathcal C(Z')^{\exp}$ which restricts to a morphism 
$i_{!}:I_{S}C(\theta_Z)^{exp} \to I_{S}C(\theta_{Z'})^{exp}$. 
\end{axiom}

\begin{axiom}[Projection along $k$-variables] \label{axiom:projk}
	Let $S$ be a definable subassignment in $\Def_k$. Let $\theta_Y : Y \to S$ be in $\Def_S$. Let $n\geq 0$ be an integer. 
	We denote by $Z$ the definable set $Y[0,n,0]$, by $\pi : Z \ra Y$ the canonical projection, and by $\theta_Z$ the structural map $\pi \circ \theta_Y$.
	By proposition \ref{prop:dissociation}, there is a canonical isomorphism
	$$\mathcal C(Z) \simeq K_{0}(\RDef_{Z}) \otimes_{\mathcal P^{(0)}(Y)} \mathcal P(Y)$$
	which allows to define a ring morphism $\pi_{!}:\mathcal C(Z)\ra \mathcal C(Y)$ by sending $\sum_{i}a_i\otimes \varphi_i$ to 
	$\sum_{i}\pi_{!}(a_i) \otimes \varphi_i$ with $a_i$ in $K_{0}(\RDef_{Z})$, $\varphi_i$ in $\mathcal P(Y)$ and $\pi_{!}(a_i)$ defined in  subsection \ref{Res-pull-back-push-forward}.
	For any constructible function $\varphi$ in $\mathcal C(Z)$, the class $[\varphi]$ is $\pi$-integrable and 
	$$\pi_{!}([\varphi])=[\pi_{!}(\varphi)]$$ 
	where $\pi_{!}$ is defined above.
	
	The map $\pi_{!}:K_{0}(\RDef_Z^{\exp}) \to K_{0}(\RDef_Y^{\exp})$ induces a ring morphism 
	$$\pi_{!}:\mathcal C(Z)^{\exp} \to \mathcal C(Y)^{\exp}.$$
	Furthermore, as $\pi$ sends subassignments of $Z$ to subassignments of $Y$ of the same dimension, there are group morphisms $\pi_{!}:\mathcal C^{\leq d}(Z)^{\exp} \to \mathcal C^{\leq d}(Y)^{\exp}$ for all $d$, and a graded group morphism 
	$\pi_!:C(Z)^{\exp} \to C(Y)^{\exp}$ which restricts to a morphism
	$$\pi_!:I_{S}C(\theta_Z)^{\exp} \to I_{S}C(\theta_Y)^{\exp}.$$ 
\end{axiom}

\begin{axiom}[Projection along $\mathbb Z$-variables] \label{axiom:projZ}
Let $S$ be a definable subassignment in $\Def_k$. Let $\theta_Y : Y \to S$ be in $\Def_S$. Let $r\geq 0$ be an integer. 
	We denote by $Z$ the definable set $Y[0,0,r]$, by $\pi : Z \ra Y$ the canonical projection, and by $\theta_Z$ the structural map $\pi \circ \theta_Y$.

	Let $\varphi$ be a constructible function in $\C(Z)$. 
The class $[\varphi]$ is $\pi$-integrable if and only if $\varphi$ can be written as $\varphi = \varphi_{Y} \otimes \varphi_{\mathcal P}$, where
$\varphi_{Y}$ is a constructible function in $\C(Y)$ and $\varphi_{\mathcal P}$ is a Presburger function in $I_{Y}\mathcal P(Z)$, namely $\varphi_{\mathcal P}$ is a $Y$-integrable Presburger function on $Z$: for any $q>1$, for any $y$ in $Y$ the family
$\sum_{x\in \mathbb Z^r} \nu_q(\varphi_{\mathcal P}(y,x))$ is summable with $\nu_q : \mathbb A \to \mathbb R$ is the unique morphism of rings mapping $\mathbb L$ to $q$.
In that case we have 
$$\pi_{!}[\varphi] = [\varphi_{Y}\otimes \pi_{!}(\varphi_{\mathcal P})],$$ where 
$\pi_{!}(\varphi_{\mathcal P})$ is the unique constructible function in $\C(Y)$ such that for any $y$ in $Y$, for any $q>1$
$$\nu_q\left( (\pi_{!}\varphi_{\mathcal P})(y) \right) = \sum_{x\in \mathbb Z^r} \nu_q(\varphi_{\mathcal P}(y,x)).$$
This defines a morphism $\pi_{!}:I_{S}C(\theta_Z)\to I_{S}C(\theta_Y)$, which induces by tensor product a graded group morphism
	$$\pi_{!}:I_{S}C(\theta_Z)^{\exp} \to I_{S}C(\theta_Y)^{\exp}$$
	using the fact that the canonical morphism 
	$$K_{0}(\RDef_Y^{\exp}) \otimes_{\mathcal P^{0}(Y)} \mathcal P^{0}(Z) \ra K_{0}(\RDef_{Z}^{\exp}),$$
	is an isomorphism.
\end{axiom}

\begin{axiom}[Volume of graph; 0-cell] \label{volume-0-cell}
	Let $\theta_Y$ be in $\Def_{S}$, and
	$$Z=\{(y,z)\in Y[1,0,0] \mid z=c(y)\}$$
	where $c:Y \ra h[1,0,0]$ is a definable morphism. Denote by
	$f:Z\ra Y$ the morphism induced
	by the projection from $Y \times h[1,0,0]$ to $Y$, and $\theta_Z$ its composition with $\theta_Y$. Then, the constructible function $[1_Z]$ is $\theta_Z$-integrable if and only if $\mathbb L^{(\ord \text{jac} f)\circ f^{-1}}$ is $\theta_Y$-integrable. 
	In that case, in the ring $I_SC(Y)^{\exp}$ we have the equality $$f_{!}([1_Z]) = \mathbb L^{(\ord \text{jac} f)\circ f^{-1}}.$$
  \end{axiom}

\begin{axiom}[Volume of balls; 1-cell]  \label{volume-boule}
 Let $\theta_Y$ be in $\Def_{S}$, and
 $$Z=\{(y,z)\in Y[1,0,0] \mid \ord(z-c(y))=\alpha(y),\: \ac(z-c(y))=\xi(y)\}$$
 where $\alpha:Y\ra \mathbb Z$,
 $\xi:Y \ra h[0,1,0]\setminus \{0\}$ and
 $c:Y \ra h[1,0,0]$ are definable morphisms. Denote by
 $f:Z\ra Y$ the morphism induced
 by the projection from $Y \times h[1,0,0]$ to $Y$, and $\theta_Z$ its composition with $\theta_Y$. Then, the constructible function $[1_Z]$ is
 $\theta_Z$-integrable if and only if
 $\mathbb L^{-\alpha-1}[1_{Y}]$ is $\theta_Y$-integrable. In that case, in the ring $I_SC(Y)^{\exp}$ we have the equality $f_{!}([1_Z]) = \mathbb L^{-\alpha -1}[1_{Y}]$.
\end{axiom}

 By Axiom \ref{volume-boule}, the volume of a ball
 $\{z \in h[1,0,0] \mid \ord(z-c)=\alpha, \ac(z-c)=\xi\}$
 with $\alpha$ in $\mathbb Z$, $c$ in $k\llp t \rrp$ and $\xi$ in $k$, $\xi\neq 0$ is $\mathbb L^{\alpha-1}$.
 This is natural by analogy with the $p$-adic case.

\begin{axiom}[Relative balls of large volume]
 Let $\theta_Y$ be in $\Def_{S}$ and
 $$Z=\{(y,z)\in Y[1,0,0] \mid \ord z =\alpha(y),\: \ac z =\xi(y)\}$$
 where $\alpha:Y\ra \mathbb Z$, $\xi:Y \ra h[0,1,0]\setminus \{0\}$ are definable morphisms. Let $f:Z\ra Y$ be the morphism induced by the projection from $Y[1,0,0]$ to $Y$.
 Suppose that the constructible function $[1_{Z}]$ is $(\theta_Y \circ f)$-integrable and moreover $\alpha(y)<0$ holds for every $y$ in $Y$, then
 $f_{!}(E(z)[1_Z])=0.$
\end{axiom}

The previous axiom is also natural by analogy with the $p$-adic case, where an additive character evaluated in the identity function and integrated over a large ball is naturally zero.

  \begin{thm}[Change of variables formula, theorem 5.2.1 of \cite{CluLoe10a}]\label{cov}
   Let $f:X\ra Y$ be a definable isomorphism between definable subassignments of dimension $d$. Let $\varphi$
   be in $\mathcal{C}^{\leq d}(Y)^{\:\exp}$ with a nonzero class in $C^{d}(Y)^{\:\exp}$. Then $[f^{*}(\varphi)]$ belongs to $I_{Y}C^{d}(f)^{\exp}$ and
   $$f_{!}([f^{*}(\varphi)])=\mathbb L^{\ord (\mathrm{Jac} f) \circ f^{-1}}[\varphi].$$
  \end{thm}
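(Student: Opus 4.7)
The plan is to mimic the inductive construction of the push-forward functor itself: establish the formula by induction on the valued-field dimension $d=\Kdim X=\Kdim Y$, first peeling off the residue-field and value-group sorts (which carry no Jacobian contribution), then using cell decomposition in the valued-field sort to reduce to the two elementary cases dictated by Axioms \ref{volume-0-cell} and \ref{volume-boule}.

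As a first reduction, I would factor $f$ through intermediate projections that separate the three sorts and invoke Axioms \ref{axiom:projk} and \ref{axiom:projZ}, together with the dissociation result of Proposition \ref{prop:dissociation}, to integrate out the residue-field and value-group components. The Jacobian of $f$ in those directions is trivial, so these steps contribute no factor to $\mathbb L^{\ord(\mathrm{Jac}\,f)\circ f^{-1}}$. Fubini (Axiom \ref{axiom-Fubini}) and the projection formula (Axiom \ref{axiom-projection}) justify these reductions both at the level of integrability and at the level of the identity to be proved. It then remains to handle a definable isomorphism whose essential dimension lies in the valued-field variables.

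For the inductive step in the valued-field sort, write $X\subset S[1,0,0]$ with $\Kdim S=d-1$ and decompose $X$ into a finite disjoint union of $0$-cells and $1$-cells via Theorem \ref{celldecompthm}. By Axiom \ref{axiom:additivity} it suffices to treat each cell separately. On a $0$-cell with center $c$, Axiom \ref{volume-0-cell} computes the push-forward onto the base and introduces precisely the factor $\mathbb L^{\ord\,\mathrm{Jac}\,f}$; combining with the inductive hypothesis on the restriction of $f$ to the base yields the desired identity via the chain rule. On a $1$-cell with order $\alpha$ and angular component $\xi$, Axiom \ref{volume-boule} contributes $\mathbb L^{-\alpha-1}$, and a direct computation with the cell presentation $u\mapsto c(y)+\xi(y)t^{\alpha(y)}u$ shows that $-\alpha-1$ is exactly the order of the Jacobian contribution in the remaining valued-field direction, which again patches with the inductive hypothesis on the base. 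The exponential factor $E(g)e(\xi)$ travels along through the tensor product $\mathcal C(Z)^{\exp}=\mathcal C(Z)\otimes_{K_{0}(\RDef_Z)} K_{0}(\RDef_Z^{\exp})$ and the pull-back/push-forward compatibility of \S\ref{Res-pull-back-push-forward}, so the exponential case reduces to the non-exponential one.

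The main obstacle is the bookkeeping of the Jacobian under the cell decomposition: one must verify a motivic chain-rule identity of the form $\ord\,\mathrm{Jac}\,(g\circ h)=\ord(\mathrm{Jac}\,g\circ h)+\ord\,\mathrm{Jac}\,h$ that holds on a subassignment whose complement has dimension strictly less than $d$, so that the accumulated cell-by-cell contributions reassemble into the global factor $\mathbb L^{\ord(\mathrm{Jac}\,f)\circ f^{-1}}$. Because the theorem is stated in the graded piece $C^{d}(Y)^{\exp}$, any discrepancy supported on a lower-dimensional subassignment is killed; this is crucial, since the locus where $\mathrm{Jac}\,f$ degenerates has dimension strictly less than $d$ precisely because $f$ is a definable isomorphism, and similarly the boundaries arising from cell decompositions are absorbed.
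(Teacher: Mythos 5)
You should first note that the paper contains no proof of Theorem \ref{cov} to compare against: it is recalled verbatim from \cite{CluLoe10a} (Theorem 5.2.1) and used as a black box in Section \ref{proof}. So your sketch is really an attempt to reconstruct the Cluckers--Loeser argument from the axioms quoted in Section \ref{survey}, and judged on those terms it has genuine gaps rather than being a complete alternative proof.

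The main gap is structural. Axioms \ref{volume-0-cell} and \ref{volume-boule} compute push-forwards only for maps of a very special shape, namely the projection of a $0$-cell or $1$-cell onto its base; decomposing $X$ into cells does not by itself turn an arbitrary definable isomorphism $f:X\to Y$ into such projections, since $f$ need not respect any cell structure on $Y$ and is not a coordinate projection on each piece. The heart of the original proof is precisely the missing step: one must factor $f$, piecewise and up to definable subassignments of dimension $<d$, into elementary isomorphisms (inclusion into the graph followed by projections, maps affecting one valued-field coordinate at a time), and one must actually prove the ``motivic chain rule'' $\ord \Jac (g\circ h)=\ord(\Jac g\circ h)+\ord \Jac h$ outside a subassignment of dimension $<d$, which you only postulate in your last paragraph. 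Without the factorization and without a proof of that additivity, the cell-by-cell contributions ($\mathbb L^{\ord\Jac}$ from Axiom \ref{volume-0-cell}, $\mathbb L^{-\alpha-1}$ from Axiom \ref{volume-boule}) cannot be reassembled into the single global factor $\mathbb L^{\ord(\mathrm{Jac} f)\circ f^{-1}}$, so the induction does not close. A second gap is the exponential step: the push-forward of an exponential constructible function along valued-field variables is not obtained by letting $E(g)e(\xi)$ ``travel through the tensor product''; it is defined via the auxiliary morphism $\delta_{f,g,\eta}$, the extra variables $x,\xi$, and the partition into the pieces $B$, $A_1$, $A_2$ recalled in Example \ref{example-integration-avec-exponentielle} (equation (\ref{formule-integration-exp-valuee})), together with the axiom on relative balls of large volume. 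Deducing the exponential change of variables from the non-exponential one therefore requires a concrete compatibility check of $f^*$ and $f_!$ with that construction, which your argument does not supply.
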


    We give some ideas of the construction of this push-forward and refer to \cite{CluLoe08a} and \cite{CluLoe10a} and to the surveys \cite{CluHalLoe11}, \cite{CluLoe05a} and \cite{GorYaf09} for further details. For instance, we fix a base $S$, we consider a definable morphism $f:Y\ra S$ where $Y$ is a subassignment of some $h[m,n,r]$ and we denote by $\Gamma_f$ the graph of $f$. By functionality the morphism $f_!$ is the composition $p_! \circ i_!$ where $i : Y\ra \Gamma_f$ and $p:\Gamma_f \ra S$ are the canonical injection and projection. Thus, it is enough to know how to construct the push-forward morphisms for injections and projections. The case of definable injection is done using extension by zero of constructible functions, and an adjustment with a Jacobian to match the induced measures.
  Using the axiom of the volume of balls and the change of variables formula, we observe that the construction of the push-forward morphism for a projection is done by induction on the valued field dimension. For instance, $\Gamma_f$ can be seen as a definable subassignment of $S'[1,0,0]$ where $S'$ is the definable set $S[m-1,n,r]$ and the push-forward $p_{!}$ will be the composition $p^{(m-1)}_!\circ  \pi_{!}$ where $\pi : \Gamma_f \ra S'$ and $p^{(m-1)} : S' \ra S$ are canonical projections. The construction does not depend on the order of such projections and the main tool is the  cell decomposition theorem stated above. Once the valuative dimension is zero we have to define a push-forward of a projection from some $S[0,n',r']$ to
  $S$. This is done using the independence between the residue field and the value group, coming from theorem \ref{QE}, see for instance 
  Proposition \ref{prop:dissociation}. The push-forward along residue variables is simply the push-forward induced by composition at the level of Grothendieck ring cf. Axiom \ref{axiom:projk} (and \cite[\S 5.6]{CluLoe08a} ). The integration along $\mathbb Z$-variables corresponds to summing over the integers, cf. Axiom \ref{axiom:projZ} (and \cite[\S 4.5]{CluLoe08a}).

  \begin{example}\textbf{(Integration of constructible functions, \cite[\S 11.1]{CluLoe08a}).} \label{example-sans-exponentielle}

	  In this example, we illustrate the computation of the integration along a projection $f:S\times Y \to S$ with $Y=h[1,n,r]$.
	  We will use this computation in subsection \ref{cas-sans-exp}. 
	  Let $\varphi$ be a constructible function in $\mathcal C(S[1,n,r])$.
	  By the cell decomposition theorem (theorem \ref{celldecompthm}), there is a cell decomposition of $S[1,n,r]$ adapted to $\varphi$ denoted by $(Z_i)_{i\in I}$. For any $i$ in $I$, the cell $Z_i$ has a presentation $(\lambda_i, Z_{C_i})$, and there is a constructible function $\psi_i$ in $\mathcal C(C_i)$ such that 
	  \begin{equation} \label{equation:phii} \varphi_{\mid Z_i} = \lambda_i^* p_i^*(\psi_i)1_{Z_i} \end{equation}
	  where $Z_{C_i}$ is a subassignment of some $h[1,n+n_i,r+r_i]$, $C_i$ is a subassignment of $h[0,n+n_i,r+r_i]$, $p_i:Z_{C_i} \to C_i$ and $q_i:C_i \to S$ are the projections. By a refinement of the cell decomposition we can assume that for any $i$, the restriction $\varphi_{\mid Z_i}$ is either zero or has the same $K$-dimension as $Z_i$. By the additivity axiom (Axiom \ref{axiom:additivity}) $\varphi$ will be $f$-integrable if and only for any $i$ in $I$ the restriction $\varphi_{\mid Z_i}$ is $f$-integrable and in that case 
	  $$f_{!}\varphi = \sum_{i \in I} f_{!}\left(\varphi_{\mid Z_i}\right).$$
	  For any $i$ in $I$, we consider the commutative diagram 
	  $$\xymatrix{
		  Z_i  \ar^{\lambda_i}_{\simeq}[r] \ar_{f}[d] & Z_{C_i} \ar[d]^{p_i} \\
		  S & C_i \ar^{q_i}[l]
	  }.
	  $$
	  Using equation (\ref{equation:phii}), the projection axiom (Axiom \ref{axiom-projection}) and the Fubini axiom (Axiom \ref{axiom-Fubini}), the following statement are equivalent
	  \begin{itemize}
		  \item the restriction $\varphi_{\mid Z_i}$ is $f$-integrable,
		  \item the constructible function $p_i^{*}(\psi_i)[1_{Z_{C_i}}]$ is $f\circ \lambda_i^{-1}$-integrable 
			  namely $q_i \circ p_i$-integrable,
		  \item the constructible function $\psi_i p_{i!}[1_{Z_{C_i}}]$ is $q_i$-integrable with 
			  $p_{i!}[1_{Z{C_i}}]$ is an integration of a $0$-cell given by (Axiom \ref{volume-0-cell}) or a $1$-cell given by (Axiom \ref{volume-boule}).
	  \end{itemize}
	  The $q_i$-integrability condition and the $q_i$-integration can be treated by proposition \ref{prop:dissociation} and (Axioms \ref{axiom:projk} and \ref{axiom:projZ}). If all these constructible functions are integrable then 
	  $$f_{!}\varphi = \sum_{i \in I} q_{i!}\left(\psi_i p_{i!}[1_{Z{C_i}}]\right).$$
  \end{example}
  \begin{example}\textbf{(Integration of exponential constructible functions in \cite{CluLoe10a})} \label{example-integration-avec-exponentielle}
	  We consider two cases.
	  \begin{itemize}
		  \item[$\bullet$] Integration along the projection $\pi_S^{S[1,0,0]}:S[1,0,0] \to S$.
			  
			  Let $S$ be a definable set in $\Def_k$.
			  Let $\varphi$ be a constructible function in $\mathcal C(S[1,0,0])^{\exp}$.
			  We can write $$\varphi = [f:Y\ra S[1,0,0]]e^{\xi}E(g)\otimes \tilde{\varphi}$$
			  with $Y\subset S[1,n_Y,0]$.
			  In the construction  \cite[\S 5.1]{CluLoe10a}, using a cell decomposition adapted to $\tilde{\varphi}$ 
			  the authors consider a definable isomorphism $$\lambda : Y \to Y' \subset Y[0,n,r]$$ and the following diagram 
			  $$ 
			  \xymatrix{
				  Y \ar_{\lambda}^{\simeq}[d] \ar^{f}[rr] && S[1,0,0] \ar^{\pi_{S}}[r] & S \\
				  Y'\ar^{\pi'}[rr] && S[0,n+n_Y,r] \ar_{p'}[ur] 
			  }
			  $$
			  where $\pi'$ and $p'$ are projections . They define $\xi'=\xi \circ \lambda^{-1}$, $g'=g\circ \lambda^{-1}$ and 
			  $\varphi'=\lambda^{-1*}f^{*}(\tilde{\varphi})$.
			  By construction there exists also a unique definable function $\tilde{\xi}':S[0,n_Y+n,r] \to h[0,1,0]$ such that $\xi'=\tilde{\xi}'\circ \pi'$.

			  We consider an element of $Y'$ as a couple $(x,y)$ with $x$ in $S[0,n_Y,0]$ and $y$ in $h[1,0,0]$.
			  Following from its construction, the authors decompose the definable set $Y'$ as union $A\cup B$, such that $g'(x,.):y\to g'(x,y)$ is constant on each fiber $B_x$ and 
			  non constant and injective on each fiber $A_x$ where for each $x$, 
			  $$A_x=\{y\in h[1,0,0] \mid (x,y)\in A\}\:\: \text{and} \:\: B_x = \{y \in h[1,0,0] \mid (x,y) \in B\}.$$
			  As $g'$ is constant along fibers of $B_x$, we denote by $\tilde{g}':\pi'(B)\to h[1,0,0]$ the unique definable function such that 
			  $g'_{\mid B} = \tilde{g}' \circ \pi'_{\mid B}$.
			  They refined the above partition, decomposing $A$ as the union $A_1 \cup A_2$ with
			  $$A_1:=\{(x,y)\in A \mid g'(x,.)\:\text{maps $A_x$ onto a ball of volume $\mathbb L^{-j}$ with $j\leq 0$}\}$$
			  and 
			  $$A_2:=\{(x,y)\in A \mid g'(x,.)\:\text{maps $A_x$ onto a ball of volume $\mathbb L^{-j}$ with $j> 0$}\}$$
			  Finally by their construction they consider two definable morphisms 
			  $$r':S[0,n_Y+n,r] \to h[1,0,0] \: \text{and} \: \eta':S[0,n_Y+n,r] \to h[0,1,0]$$
			  such that for any $(x,y)$ in $A_2$
			  $$g'(x,y)-r'(x) = \eta(x) \mod (t).$$
			  Then, using all these notations they define 
			  \begin{equation} \label{formule-integration-exp-valuee}
				  \pi_{S!}^{S[0,1,0]}([\varphi]) := 
				  p'_{!}\left(e^{\tilde{\xi}'}E(\tilde{g'})\pi'_{!}([1_B\varphi'])+ e^{\tilde{\xi}'+\eta}E(r)\pi'_{!}([1_{A_2}\varphi'])\right)
		          \end{equation}

		  \item[$\bullet$] Integration along $f:Z \to Y$.  
			  Let $S$ be a definable set in $\Def_k$. Let $f:Z \to Y$ be a 
			  morphism in $\Def_S$. Let $\varphi$ in $I_SC(Z)^{\exp}$ be of the form
			  $$\varphi = E(g)e^{\eta}[p:X \ra Z]\varphi_0$$
			  with $p:X \to Z$ in $\RDef_Z$, $g:X \to h[1,0,0]$ and $\eta: \to h[0,1,0]$ definable morphisms and $\varphi_0$ in $I_SC(Z)$.
			  We denote by $\delta_{f,g,\eta}:X \to Y[1,1,0]$ the morphism sending $x$ to $\left( (f\circ p)(x),g(x),\eta(x) \right)$. 
			  We denote by 
			  $$\pi_{Y[0,1,0]}^{Y[1,1,0]} : Y[1,1,0] \to Y[0,1,0]\:\:\text{and}\:\:\pi_Y^{Y[0,1,0]}:Y[0,1,0] \to Y$$
		  the projections, and by $x$ and $\xi$ the canonical coordinates on the fibers of $\pi_{Y|0,1,0]}^{Y[1,1,0}$ and $\pi_Y^{Y[0,1,0]}$.
			  Then, using integration along a residue variable (see Axiom \ref{axiom:projk}) and integration along one valued field variable explained above, Cluckers and Loeser define 
			  $$f_{!}(\varphi) := \pi_{Y!}^{Y[0,1,0]}\left(\pi_{Y[0,1,0]!}^{Y[1,1,0]}\left(E(x)e^{\xi}\delta_{f,g,\eta!}(p^ *\varphi_0)\right)\right).$$
	  \end{itemize}
 \end{example}

 \subsection{Commutativity of pull-back and push-forward functors}

\begin{notations} Let $A$, $B$, $C$, $D$ some sets. Let $f:A\ra C$ and $g:B\ra D$ be some applications. We denote by $f\times g$ 
	the application from $A\times B$ to $C\times D$ which maps $(a,b)$ to $(f(a),g(b))$. Let $\varphi : A\times B \ra C$ and 
	$\psi:A\times B \ra D$ be some applications. We denote by $(\varphi,\psi)$ the application from $A\times B$ to $C\times D$ which maps
	$(a,b)$ to $(\varphi(a,b),\psi(a,b))$.
\end{notations}

In \cite{WF}, the second author introduced a notion of \emph{definable distributions} in Cluckers-Loeser motivic setting. He introduced also a notion of \emph{motivic wave front set} of a definable distribution, which allows him, as in the real setting \cite{Hormander83} or in the p-adic setting \cite{Hei85a} and \cite{UD}, to study the pull-back of a distribution by a function which requires the natural following commutativity relation between pull-back and push-forward functors that we prove in section \ref{proof}.

\begin{lem} 
	\label{projlem} 
	Let $S$ be a definable set in $\Def_k$ and $\gamma : W \ra W'$ be a definable morphism in $\Def_S$. Let $X$ be a definable set in $\Def_S$. We denote by $\pi_W$ the projection from $W\times_S X$ to $W$ and by $\pi_{W'}$ the projection from $W' \times_S X$ to $W'$. Let $\varphi$ be a constructible exponential function in $\C(W'\times_S X)^{exp}$.
	\begin{enumerate}
		\item \label{thm:integrability-condition} If $[\varphi]$ is $\pi_{W'}$-integrable then 
			$[(\gamma \times Id_X)^*\varphi]$ is $\pi_W$-integrable.
			Furthermore, if $\gamma$ is onto then this implication is an equivalence.
		\item \label{thm:equality} If $[\varphi]$ satisfies the condition (\ref{thm:integrability-condition}) then  
			\begin{equation}
				\pi_{W!}\left[(\gamma\times Id_X)^*\varphi \right] = \gamma^*(\pi_{W'!} [\varphi]).
			\end{equation}
	\end{enumerate}
\end{lem}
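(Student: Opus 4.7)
The plan is to prove both statements simultaneously by reducing the push-forward $\pi_{W'!}$ to a composition of simpler operations and verifying commutativity with $(\gamma \times Id_X)^*$ at each step. Writing $X$ as a definable subassignment of some $S \times h[m,n,r]$, the projection $\pi_{W'} : W' \times_S X \to W'$ factors through $W' \times h[m,n,r]$ as an inclusion followed by a sequence of projections, each one stripping off a single coordinate of $h[m,n,r]$. By the Fubini axiom (Axiom \ref{axiom-Fubini}), this factorization is compatible with integrability and push-forward, and $(\gamma \times Id_X)^*$ respects it as well, by the universal property of fiber products. So the first step is to reduce to the following atomic cases: an inclusion, and each of the three types of single-variable projections.

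For the three formal atomic cases, the argument is a direct check against the axioms. An inclusion is handled by extension by zero (Axiom \ref{axiom-push-forwar-inclusion}), which commutes with pull-back. A projection along a residue variable is handled by $\RDef$ functoriality (Axiom \ref{axiom:projk}), for which base change commutes with push-forward. A projection along a value group variable is summation over $\mathbb{Z}^r$ (Axiom \ref{axiom:projZ}), which commutes with $\gamma^*$ termwise; the summability condition for $\nu_q$ transports under pull-back, and the converse under surjective $\gamma$ is clear fiberwise.

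The heart of the argument is the projection along a single valued field variable, say $\pi : W' \times_S Y \to W' \times_S Y'$ with $Y = Y'[1,0,0]$. The plan is to apply cell decomposition (Theorem \ref{celldecompthm}) to $\varphi$ and its support, writing it as a sum of functions supported on cells $Z_i$ with presentations $(\lambda_i, Z_{C_i, c_i, \alpha_i, \xi_i})$. The key observation is that the preimage of such a cell under $\gamma \times Id_Y$ is again a cell, with base $(\gamma \times Id_{Y'})^{-1}(C_i)$ and with center, order and angular component obtained by precomposition with $\gamma \times Id_{Y'}$; the pulled-back $\lambda_i$ is a presentation of this new cell. The volume axioms (Axioms \ref{volume-0-cell} and \ref{volume-boule}) then express $\pi_{!}[1_{Z_i}]$ on $C_i$ in terms of $\alpha_i$ alone (or of the order of a Jacobian in the $0$-cell case), and these formulas are manifestly functorial under $\gamma^*$. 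Combined with the residue and value group steps on the base $C_i$, as in Example \ref{example-sans-exponentielle}, this settles the non-exponential case.

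The exponential case follows the same scheme, but uses the explicit formula (\ref{formule-integration-exp-valuee}) of Example \ref{example-integration-avec-exponentielle}. The decomposition $Y' = A_1 \cup A_2 \cup B$ and the auxiliary morphisms $r'$, $\eta'$, $\tilde{\xi}'$ are defined by fiberwise conditions on $g'$ and by reduction modulo $t$, both of which transport under pull-back by $\gamma \times Id_Y$; the pulled-back data realize the analogous construction for $(\gamma \times Id_X)^*\varphi$, and each summand of (\ref{formule-integration-exp-valuee}) commutes with $\gamma^*$. I expect the main obstacle to lie precisely in this last step: one must verify that the ad hoc choices made in the construction of $\pi_{W'!}$ (an adapted cell decomposition, the partition $A_1 \cup A_2 \cup B$, the functions $r'$ and $\eta'$) can be transported compatibly through $\gamma \times Id_X$, so that the same inductive data work on both sides of the claimed equality. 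Once this bookkeeping is in place, the equality will follow piece by piece from the explicit formulas, and the integrability statement (including the converse under surjective $\gamma$) will be a byproduct since each atomic step preserves and, when $\gamma$ is onto, reflects integrability.
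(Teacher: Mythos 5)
Your proposal is correct and follows essentially the same route as the paper: reduction via Fubini/splitting and extension-by-zero to the atomic cases (inclusion, residue variable, value-group variable, one valued-field variable), the key observation that the pull-back of a cell decomposition under $\gamma\times Id$ is again an adapted cell decomposition with data obtained by precomposition so that the volume axioms commute with $\gamma^*$, and the exponential case handled through the explicit construction of formula (\ref{formule-integration-exp-valuee}). The bookkeeping you flag as the main obstacle is exactly the point the paper also treats by asserting step-by-step compatibility of the construction of \cite[\S 5.1]{CluLoe10a} with the base change $W'[0,1,0]\to W[0,1,0]$.
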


This lemma can be generalized in the following way 
\begin{thm}
\label{mainthm}
	Let $S$ be a definable set in $\Def_k$ and $\gamma : W \ra W'$ be a definable morphism in $\Def_S$. Let $f:X\to Y$ be a definable morphism in $\Def_S$. We denote by $\pi_W$ the projection from $W\times_S X$ to $W$ and by $\pi_{W'}$ the projection from $W' \times_S X$ to $W'$.
Let $\varphi$ be a constructible function in $\C(W'\times_S X)^{\exp}$.
	\begin{enumerate}
		\item \label{thm:integrability-condition-f} If $[\varphi]$ is $(\pi_{W'}\times f)$-integrable then 
			$[(\gamma \times Id_X)^*\varphi]$ is $(\pi_W \times f)$-integrable.
			Furthermore, if $\gamma$ is onto then this implication is an equivalence.
		\item \label{thm:equality-f} If $[\varphi]$ satisfies the condition (\ref{thm:integrability-condition-f}) then  
			\begin{equation}
				(\pi_{W}\times f)_{!}\left[(\gamma\times Id_X)^*\varphi \right] = 
				(\gamma\times Id_{Y})^*\left((\pi_{W'}\times f)_{!} [\varphi]\right).
			\end{equation}
	\end{enumerate}
\end{thm}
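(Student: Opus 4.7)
The plan is to deduce Theorem \ref{mainthm} from Lemma \ref{projlem} by a formal base change from $S$ to $Y$.

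First, I would set $\tilde W := W\times_S Y$ and $\tilde W' := W'\times_S Y$, viewed as objects of $\Def_Y$ via their second projections, and $\tilde\gamma := \gamma \times Id_Y : \tilde W \to \tilde W'$, which is then a morphism in $\Def_Y$. The structural morphism $f : X \to Y$ makes $X$ an object of $\Def_Y$ as well.

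Next, I would exhibit a canonical isomorphism
$$\Phi : W\times_S X \xrightarrow{\sim} \tilde W\times_Y X,\qquad (w,x) \longmapsto (w, f(x), x),$$
with inverse the projection onto the first and third components; the middle coordinate is redundant because the defining condition of $\tilde W \times_Y X$ forces $y = f(x)$. An analogous isomorphism $\Phi'$ identifies $W'\times_S X$ with $\tilde W'\times_Y X$. Under $\Phi$ (respectively $\Phi'$) the projection $\pi_{\tilde W}$ (respectively $\pi_{\tilde W'}$) corresponds to $\pi_W\times f$ (respectively $\pi_{W'}\times f$), the morphism $\tilde\gamma \times Id_X$ corresponds to $\gamma \times Id_X$, and the pullback $\tilde\gamma^*$ corresponds to $(\gamma\times Id_Y)^*$.

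Then, I would invoke Lemma \ref{projlem} with the base $Y$ in place of $S$ and data $(\tilde W, \tilde W', \tilde\gamma, X)$. Since $\pi$-integrability and the push-forward $\pi_!$ along a definable morphism $\pi$ are intrinsic to $\pi$ and do not depend on the ambient base category, transferring through $\Phi$ and $\Phi'$ translates the hypothesis ``$[\varphi]$ is $(\pi_{W'}\times f)$-integrable'' into ``$[\varphi]$ is $\pi_{\tilde W'}$-integrable''. Part~(1) of the Lemma then yields part~(1) of the Theorem, with the equivalence under surjectivity preserved since $\tilde\gamma = \gamma\times Id_Y$ is onto if and only if $\gamma$ is. Part~(2) of the Lemma gives the equality
$$\pi_{\tilde W !}\bigl[(\tilde\gamma\times Id_X)^*\varphi\bigr] = \tilde\gamma^*\bigl(\pi_{\tilde W'!}[\varphi]\bigr),$$
which, after translation through $\Phi,\Phi'$ and substitution $\tilde\gamma = \gamma \times Id_Y$, is precisely the desired equality in part~(2) of the Theorem.

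Because the reduction is purely formal, there is no genuine obstacle in deducing Theorem \ref{mainthm} from Lemma \ref{projlem}; the real difficulty lies in the proof of Lemma \ref{projlem} itself, which the paper handles by following the inductive Cluckers--Loeser construction (cell decomposition to reduce to push-forward along residue field variables, along value group variables, and to the $0$- and $1$-cell volume axioms).
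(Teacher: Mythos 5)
Your overall route is the paper's own: the paper likewise identifies $W\times_S X$ with $(W\times_S Y)\times_Y X$ (via the graph $\Gamma_f$ and the injection $i_f:X\to\Gamma_f$, which is your $\Phi$) and then applies Lemma \ref{projlem} ``relatively to $Y$'' to the square of projections $(W\times_S Y)\times_Y X\to W\times_S Y$ with $\tilde\gamma=\gamma\times Id_Y$; note also that only the implication ``$\gamma$ onto $\Rightarrow$ $\gamma\times Id_Y$ onto'' is needed (your ``if and only if'' may fail when $Y\to S$ is not surjective, but that direction is never used).

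The gap is the claim that the transfer through $\Phi$ and $\Phi'$ is ``purely formal'' because integrability and $\pi_!$ are intrinsic. Independence of the ambient base ($\Def_S$ versus $\Def_Y$) is indeed harmless, but transporting classes through $\Phi'$ is not measure-neutral: $\Phi'$ is a definable isomorphism onto a graph sitting in an ambient space with extra valued-field coordinates, so by Fubini (Axiom \ref{axiom-Fubini}) and the change of variables formula (Theorem \ref{cov}) one gets
$(\pi_{W'}\times f)_![\varphi]=\pi_{\tilde W'!}\left(\mathbb{L}^{\ord \Jac \Phi'\circ \Phi'^{-1}}\,[(\Phi'^{-1})^*\varphi]\right)$,
and not $\pi_{\tilde W'!}[(\Phi'^{-1})^*\varphi]$; the factor $\mathbb{L}^{\ord \Jac \Phi'\circ \Phi'^{-1}}$ is nontrivial in general (compare Axiom \ref{volume-0-cell}, and the paper's remark that push-forward along injections requires ``an adjustment with a Jacobian to match the induced measures''), and it affects both the integrability condition and the value of the push-forward. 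Your conclusion is still reachable, but only after checking that the Jacobian factors on the $W$- and $W'$-sides correspond under $(\tilde\gamma\times Id_X)^*$, i.e.\ that push-forward along the graph embedding commutes with the pull-back by $\gamma\times Id$. That is precisely the step the paper does not leave implicit: it factors $\pi_W\times f$ as the embedding $Id_W\times i_f$ followed by the projection, glues the two squares with the splitting lemma \ref{lem:splitting}, treats the embedding square by the extension lemma \ref{lem:extension} (Axiom \ref{axiom-push-forwar-inclusion}), and reserves Lemma \ref{projlem} over the base $Y$ for the projection square. Replacing your ``intrinsic transfer'' sentence by this commutation statement for the embedding square (or by an explicit verification that $\ord\Jac\Phi$ is the pull-back of $\ord\Jac\Phi'$) makes your argument coincide with the paper's.
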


\section{Proofs} \label{proof}
In subsection \ref{preliminary-lemma}, we state and prove three lemmas allowing us to prove lemma \ref{projlem} and theorem \ref{mainthm} in an inductive way following step by step the motivic integration construction in \cite{CluLoe08a} and \cite{CluLoe10a}. 
In subsection \ref{proof-thm} we prove theorem \ref{mainthm} as a corollary of \ref{projlem}. 
In subsection \ref{cas-sans-exp} we give a proof of the projection lemma in the case of constructible functions without exponential, then in subsection \ref{exponential-case} we give the proof of the general case with exponential.

\subsection{Some preliminary lemmas} \label{preliminary-lemma}

\subsubsection{Extension lemma}

\begin{lem} \label{lem:extension}
	Let $S$ be a definable set and $\gamma : W \ra W'$ be a definable morphism in $\Def_S$. Let $X$ and $Y$ be two definable sets in $Def_S$ such that $W\times_S X$ and $W'\times_S X$ are respectively definable subassignments of $W\times_S Y$ and $W'\times_S Y$. Let $i_{W}$ and $i_{W'}$ be the corresponding canonical injections.
	For any constructible exponential function $\varphi$ in $\C(W'\times_S X)^{exp}$, $[\varphi]$ is $i_{W'}$-integrable and $[(\gamma\times Id_X)^*\varphi]$ is $i_W$-integrable. Furthermore, $\varphi$ satisfies the equality
	\begin{equation} \label{eq:extension}
		(\gamma\times Id_{Y})^{*}(i_{W'!}[\varphi]) = i_{W!}[(\gamma\times Id_X)^*\varphi].
	\end{equation}
\end{lem}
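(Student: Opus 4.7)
\textbf{Plan of proof of Lemma \ref{lem:extension}.}
The statement is essentially a compatibility check between two elementary operations: pull\nobreakdash-back along a definable morphism, and push\nobreakdash-forward along an inclusion (which by Axiom \ref{axiom-push-forwar-inclusion} is just extension by zero). My plan is to verify the equality on each of the building blocks of $\C(W'\times_S X)^{\exp}$ in its presentation as $K_0(\RDef^{\exp}) \otimes_{\mathcal P^{0}} \mathcal P$, and then conclude by compatibility with the tensor product.

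\textbf{Step 1: Integrability.} This part is immediate from Axiom \ref{axiom-push-forwar-inclusion}: for any $\psi$ in $\C(W'\times_S X)^{\exp}$ the extension by zero $i_{W'!}\psi$ lies in $\C(W'\times_S Y)^{\exp}$, and since any class is $\Id$\nobreakdash-integrable (the example following the definition of $I_{S}^{\exp}$), the axiom yields that $[\psi]$ is $i_{W'}$\nobreakdash-integrable. The same argument applied to $(\gamma \times \Id_X)^*\varphi$ gives the $i_W$\nobreakdash-integrability.

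\textbf{Step 2: Equality on $K_{0}(\RDef^{\exp})$.} Take a generator $[\pi_U : U \to W'\times_S X,\, \xi,\, g]$. The composition $(\gamma\times\Id_Y)^* \circ i_{W'!}$ first replaces $\pi_U$ by its composition with $i_{W'}$, then takes the fiber product with $W\times_S Y$ over $W'\times_S Y$; the composition $i_{W!}\circ(\gamma\times\Id_X)^*$ first takes the fiber product with $W\times_S X$ over $W'\times_S X$, then composes with $i_W$. Since $X$ is a definable subassignment of $Y$ in $\Def_S$, the equality $\theta_X = \theta_{Y|X}$ together with $\theta_{W'}\circ\gamma = \theta_W$ forces the natural bijection
\[
U \times_{W'\times_S Y}(W\times_S Y)\; \longleftrightarrow \;U \times_{W'\times_S X}(W\times_S X),
\]
which is a definable isomorphism over $W\times_S Y$. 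The pulled\nobreakdash-back data $\xi\circ\mathrm{pr}_U$ and $g\circ\mathrm{pr}_U$ are the same on both sides, so the two classes coincide in $K_{0}(\RDef_{W\times_S Y}^{\exp})$.

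\textbf{Step 3: Equality on $\mathcal P$.} A Presburger generator $\varphi_\mathcal{P} \in \mathcal P(W'\times_S X)$ is first extended by zero to $W'\times_S Y$ and then composed with $\gamma\times\Id_Y$; alternatively composed with $\gamma\times\Id_X$ and then extended by zero to $W\times_S Y$. At the level of functions both outputs vanish outside $W\times_S X$ and agree with $\varphi_{\mathcal{P}}(\gamma(w),y)$ on $W\times_S X$, again using $X\subset Y$ in $\Def_S$ to identify the supports. So the equality holds in $\mathcal P(W\times_S Y)$.

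\textbf{Step 4: Conclusion.} Pull\nobreakdash-back and extension by zero are both compatible with tensor products (they are defined component\nobreakdash-wise on $K_0(\RDef^{\exp})\otimes_{\mathcal P^0}\mathcal P$ and commute with the common $\mathcal P^0$\nobreakdash-action induced by composition and by $1_{W\times_S X}$), so Steps 2 and 3 assemble to give the equality (\ref{eq:extension}) on all of $\C(W'\times_S X)^{\exp}$, as claimed. The only subtlety is the bookkeeping with structural morphisms over $S$ that guarantees the fiber products in Step 2 are canonically identified; once that is made explicit the argument is purely formal.
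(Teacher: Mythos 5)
Your proposal is correct and follows essentially the same route as the paper: integrability and the identity $i_{!}[\cdot]=[i_{!}(\cdot)]$ come from Axiom \ref{axiom-push-forwar-inclusion}, and the equality (\ref{eq:extension}) is reduced to a check on generators, whose key point is the canonical identification of the two fiber products (the paper phrases this via the pasting of cartesian squares, you via a direct computation), the Presburger factor and tensor-product compatibility being routine in both. No substantive difference to report.
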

\begin{proof}
	Let $\varphi$ be an exponential constructible function in $\mathcal C(W'\times_S X)^{exp}$. By Axiom \ref{axiom-push-forwar-inclusion} (see in particular \cite[\S 5.5]{CluLoe08a} and \cite[\S 3.5]{CluLoe10a}), the classes $[\varphi]$ and $[(\gamma \times Id_X)^*\varphi]$ are respectively $i_{W'}$-integrable and $i_W$-integrable with the equalities
	$$i_{W'!}[\varphi] = [i_{W'!}(\varphi)]\:\: \text{and} \:\:i_{W!}[(\gamma \times Id_X)^*\varphi] = [i_{W!}\left((\gamma \times Id_X)^*\varphi\right)].$$
        The equality (\ref{eq:extension}) is then implied by the equality
	\begin{equation} \label{eqinclusion}
		(\gamma \times Id_Y)^*\left(i_{W'!}(\varphi)\right) = i_{W!}\left((\gamma \times Id_X)^*\varphi\right).
	\end{equation}
	which comes from the definition of the pull-back morphism in \cite[\S 5.1, \S 5.4]{CluLoe08a} and \cite[\S 3.2, \S 3.4]{CluLoe10a} 
	(see in particular subsection \ref{Res-pull-back-push-forward}) and the push-forward morphism for an injection 
	\cite[\S 5.5]{CluLoe08a} and \cite[\S 3.5]{CluLoe10a} (see also Axiom \ref{axiom-push-forwar-inclusion}). 
        Indeed, we write the exponential constructible function $\varphi$ as
	$$\varphi = [p:Z\to W'\times_S X]E(g)e(\xi)\otimes \varphi_{P}$$
	where $Z$ in $\RDef_{W'\times_S X}$, $g:Z\to h[1,0,0]$ and $\xi:Z \to h[0,1,0]$ are two definable maps and $\varphi_P$ is a Presburger function in 
	$\mathcal P(W'\times_S X)$.
	Then, the main point is that the fiber product 
	$Z \times_{(W'\times_{S} X)} (W \times_{S} X)$ of $p$ and $\gamma \times Id_X$ is isomorphic to the fiber product of 
	$i_{W'}\circ p$ and $\gamma \times Id_{Y}$. Indeed, this is a consequence from a direct computation or from the fact that $(W\times_{S} X, i_W, \gamma\times Id_X)$ is isomorphic to the fiber product of $\gamma\times Id_{Y}$ and $i_{W'}$ and the result follows from the classical pull-back lemma in the following diagram 
	$$\xymatrix{	
		Z\times_{(W'\times_S Y)} (W\times_{S}Y) \simeq Z \times_{(W'\times_S X)}(W\times_S X) \ar[rr] \ar[d] \cartesien && Z \ar[d]^{p} \\
		W\times_S X \ar[rr]^{\gamma \times Id_{X}} \ar[d]^{i_W} \cartesien && W'\times_S X \ar[d]^{i_{W'}} \\
		W\times_S Y \ar[rr]^{\gamma \times Id_{Y}} && W'\times_S Y
	}.
	$$
\end{proof}

\subsubsection{Splitting lemma}
\begin{lem} \label{lem:splitting}
	Let $S$ be a definable set and $\gamma : W \ra W'$ be a definable morphism in $\Def_S$.
	Let $X$, $Y$ and $Z$ be definable sets in $Def_S$. We consider the following commutative diagram composed with definable morphisms in $\Def_S$ 
	$$\xymatrix{
		W \times_{S} X \ar^-{\gamma \times Id_X}[r] \ar_{f}[d] & W' \times_{S} X \ar^-{f'}[d] \\
		W \times_{S} Y \ar^-{\gamma \times Id_Y}[r] \ar_{g}[d] & W' \times_{S} Y \ar^-{g'}[d] \\
		W \times_{S} Z \ar^-{\gamma \times Id_Z}[r] &  W' \times_{S} Z
	}.
	$$
	Assume 
	\begin{itemize}
		\item[$\bullet$]
			for any constructible exponential function $\varphi$ in $\C(W'\times_S X)^{exp}$, if $[\varphi]$ 
			is $f'$-integrable then $(\gamma \times Id_{X})^*[\varphi]$ is 
			$f$-integrable (with equivalence if $\gamma$ is onto) and in that case 
			\begin{equation} \label{eq1} 
				(\gamma \times Id_{Y})^*(f_{!}'[\varphi]) = 
				f_{!}[\left( \gamma\times Id_{X} \right)^*\varphi].
			\end{equation}
		\item[$\bullet$]
			for any constructible exponential function $\psi$ in $\C(W'\times_S Y)^{exp}$, if $[\psi]$ 
			is $g'$-integrable then $(\gamma \times Id_{Y})^*[\psi]$ is 
			$g$-integrable (with equivalence if $\gamma$ is onto) and in that case 
			\begin{equation} \label{eq2}
				(\gamma\times Id_Z)^*(g_{!}'[\psi]) = 
				g_{!}[\left( \gamma\times Id_{Y}\right)^*\psi].
			\end{equation}
	\end{itemize}
	then, for any constructible exponential function $\varphi$ in $\C(W'\times_S X)^{exp}$, if $[\varphi]$ is 
	$(g'\circ f')$-integrable then $(\gamma \times Id_{X})^*[\varphi]$ is 
	$(g\circ f)$-integrable (with equivalence if $\gamma$ is onto) and in that case 
	\begin{equation} \label{eq3}
		(\gamma\times Id_Z)^*((g'\circ f')_{!}[\varphi]) = 
		(g\circ f)_{!}[\left( \gamma\times Id_{X} \right)^*\varphi].
	\end{equation}
\end{lem}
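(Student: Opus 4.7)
The plan is to reduce the composition statement to the two hypothesized single-step statements via the Fubini axiom (Axiom \ref{axiom-Fubini}) together with functoriality of push-forward, which is implicit in Fubini.

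First I would handle integrability. Assume $[\varphi]$ is $(g'\circ f')$-integrable. By the Fubini axiom applied to $g'\circ f'$, this is equivalent to the pair of conditions that $[\varphi]$ is $f'$-integrable and $f'_{!}[\varphi]$ is $g'$-integrable. The first hypothesis on $(f,f')$ then gives that $(\gamma\times Id_X)^*[\varphi]$ is $f$-integrable together with the equality
\begin{equation*}
	(\gamma\times Id_Y)^*(f'_{!}[\varphi]) = f_{!}[(\gamma\times Id_X)^*\varphi].
\end{equation*}
The second hypothesis on $(g,g')$ applied to $\psi = f'_{!}[\varphi]$ then gives that $(\gamma\times Id_Y)^*(f'_{!}[\varphi])$ is $g$-integrable. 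By the displayed equality, $f_{!}[(\gamma\times Id_X)^*\varphi]$ is $g$-integrable, so the Fubini axiom applied to $g\circ f$ yields $(g\circ f)$-integrability of $[(\gamma\times Id_X)^*\varphi]$, as desired. For the converse under the surjectivity assumption on $\gamma$, note that $\gamma\times Id_X$, $\gamma\times Id_Y$, $\gamma\times Id_Z$ are all surjective, so the same argument runs in reverse: $(g\circ f)$-integrability of $[(\gamma\times Id_X)^*\varphi]$ decomposes via Fubini, the equivalence in the hypothesis on $(f,f')$ gives $f'$-integrability of $[\varphi]$ together with equality (\ref{eq1}), and then the equivalence in the hypothesis on $(g,g')$ gives $g'$-integrability of $f'_{!}[\varphi]$, which recombines via Fubini to $(g'\circ f')$-integrability of $[\varphi]$.

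For the equality (\ref{eq3}), I would simply compose the two hypothesized equalities. Assuming $[\varphi]$ is $(g'\circ f')$-integrable, by Fubini $(g'\circ f')_{!}[\varphi] = g'_{!}(f'_{!}[\varphi])$ and symmetrically $(g\circ f)_{!}[(\gamma\times Id_X)^*\varphi] = g_{!}(f_{!}[(\gamma\times Id_X)^*\varphi])$. Applying the hypothesis (\ref{eq2}) to $\psi = f'_{!}[\varphi]$ and then substituting (\ref{eq1}) gives
\begin{equation*}
	(\gamma\times Id_Z)^*(g'_{!}f'_{!}[\varphi]) = g_{!}\bigl((\gamma\times Id_Y)^*(f'_{!}[\varphi])\bigr) = g_{!}\bigl(f_{!}[(\gamma\times Id_X)^*\varphi]\bigr),
\end{equation*}
which rewrites as the desired equation (\ref{eq3}).

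The main thing to be careful about is the functoriality statement $(g\circ f)_{!} = g_{!} \circ f_{!}$; this is a standard consequence of the Fubini axiom in the Cluckers--Loeser formalism, so no obstacle is expected. The only subtle point is checking that the surjectivity hypothesis for the converse transfers from $\gamma$ to $\gamma\times Id_X$, $\gamma\times Id_Y$, $\gamma\times Id_Z$, which is immediate since the identity is surjective and the product of surjective definable morphisms is surjective (fiberwise over $S$).
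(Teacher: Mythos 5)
Your proposal is correct and follows essentially the same route as the paper: decompose $(g'\circ f')$-integrability via the Fubini axiom, apply the two hypotheses to $[\varphi]$ and to $\psi=f'_{!}[\varphi]$, recombine by Fubini, and obtain (\ref{eq3}) by substituting (\ref{eq1}) into (\ref{eq2}) together with functoriality of the push-forward. The only difference is your (harmless but unnecessary) remark on transferring surjectivity to $\gamma\times Id$, since the hypothesized equivalences are already conditioned on $\gamma$ being onto.
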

\begin{proof}
	The lemma follows from Fubini axiom (see Axiom \ref{axiom-Fubini}) and the assumptions.
	
	Indeed, let $\varphi$ be a constructible exponential function in $\C(W'\times_S X)^{exp}$. By Fubini axiom, $[\varphi]$ is $(g'\circ f')$-integrable 
	if and only if $[\varphi]$ is $f'$-integrable and $f'_{!}[\varphi]$ is $g'$-integrable.
	Then, using assumptions we observe that if $[\varphi]$ is $(g'\circ f')$-integrable then $(\gamma \times Id_{X})^*[\varphi]$ is $f$-integrable and $(\gamma \times Id_{Y})^*(f'_{!}[\varphi])$ is $g$-integrable (with equivalence if $\gamma$ is onto).
	By Fubini axiom, $(\gamma \times Id_{X})^*[\varphi]$ is $f$-integrable and 
	$f_{!}(\gamma \times Id_{X})^*[\varphi]$ is $g$-integrable if and only if  $(\gamma \times Id_{X})^*[\varphi]$ is $(g \circ f)$-integrable. Then, equation (\ref{eq2}) implies the result. Furthermore, we obtain the equality (\ref{eq3}) by a direct computation using (\ref{eq1}), (\ref{eq2}) and Fubini axiom 
	$$(\gamma\times Id_Z)^*((g'\circ f')_![\varphi]) = (\gamma\times Id_Z)^*(g'_{!}(f'_{!}[\varphi])) $$
	with
	$$
	(\gamma\times Id_Z)^*(g'_{!}(f'_{!}[\varphi])) 
	= g_{!}((\gamma\times Id_{Y})^*(f'_![\varphi]))
	= g_{!}(f_{!}[(\gamma\times Id_{X})^*\varphi]),$$
	then 
	$$(\gamma\times Id_Z)^*((g'\circ f')_![\varphi]) =
	(g \circ f)_!([(\gamma \times Id_{X})^*\varphi]).
	$$

\end{proof}

\subsubsection{Reduction lemma}

\begin{rem}  \label{rem:produit} Let $S$ be a definable set and $W$ be a definable set in $\Def_S$. Let $m$, $n$ and $r$ be some non negative integers. The fiber product $W\times_S S[m,n,r]$ is isomorphic to $W[m,n,r]$ and we identify them in the following.
\end{rem}

\begin{lem} \label{lem:reduction}
	Let $S$ be a definable set and $\gamma : W \ra W'$ be a definable morphism in $\Def_S$. Let $m$, $n$ and $r$ be some non negative integers. Assume lemma \ref{projlem} true in the $\mathcal C(W'[m,n,r])^{exp}$ case then it is true in the
	$\mathcal C(W'\times_S X)^{exp}$ case for any definable subassignment $X$ of $S[m,n,r]$.
\end{lem}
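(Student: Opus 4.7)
The plan is to factor the projection $\pi_{W'}\colon W'\times_S X \to W'$ as the composition of the inclusion into $W'[m,n,r]$ followed by the natural projection, and similarly for $\pi_W$, so that the splitting lemma combines the extension lemma with the assumed $W'[m,n,r]$-case to yield the desired statement.

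More precisely, since $X$ is a definable subassignment of $S[m,n,r]$, Remark \ref{rem:produit} identifies $W\times_S X$ (respectively $W'\times_S X$) with a definable subassignment of $W[m,n,r]$ (respectively $W'[m,n,r]$). Let $i_W$ and $i_{W'}$ denote the corresponding canonical injections, and let $\pi'_W\colon W[m,n,r]\to W$ and $\pi'_{W'}\colon W'[m,n,r]\to W'$ be the canonical projections, so that $\pi_W = \pi'_W \circ i_W$ and $\pi_{W'} = \pi'_{W'}\circ i_{W'}$. I would then consider the two-step commutative diagram
\[
\xymatrix{
W\times_S X \ar[r]^-{\gamma \times Id_X} \ar[d]_{i_W} & W'\times_S X \ar[d]^{i_{W'}} \\
W[m,n,r] \ar[r]^-{\gamma \times Id_{S[m,n,r]}} \ar[d]_{\pi'_W} & W'[m,n,r] \ar[d]^{\pi'_{W'}} \\
W \ar[r]^-{\gamma} & W'
}
\]
and observe that, under Remark \ref{rem:produit}, the middle horizontal morphism is precisely a morphism of the product form to which the hypothesis of the lemma applies.

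Next, I would apply Lemma \ref{lem:extension} to the top square to obtain the integrability transfer and commutativity of pull-back with push-forward along the injections, and apply the hypothesis to the bottom square to obtain the analogous statement along the projections. The splitting lemma (Lemma \ref{lem:splitting}) then combines both squares along the vertical compositions $\pi'_{W'}\circ i_{W'}=\pi_{W'}$ and $\pi'_W\circ i_W=\pi_W$, yielding both the integrability transfer for $[\varphi]$ and the equality $\pi_{W!}[(\gamma\times Id_X)^*\varphi] = \gamma^*(\pi_{W'!}[\varphi])$ claimed by Lemma \ref{projlem} in the $\C(W'\times_S X)^{exp}$ case.

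There is no substantive obstacle here; the only delicate point is to verify that under the identifications of Remark \ref{rem:produit}, the middle morphism $\gamma\times Id_{S[m,n,r]}$ indeed corresponds to the map required to invoke the assumption on $\C(W'[m,n,r])^{exp}$, and that the top and middle squares commute as drawn.
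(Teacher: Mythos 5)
Your proposal is correct and coincides with the paper's own argument: the paper proves the reduction lemma by considering exactly the same three-row diagram (inclusions $W\times_S X\hookrightarrow W[m,n,r]$, $W'\times_S X\hookrightarrow W'[m,n,r]$ on top, canonical projections to $W$, $W'$ below), applying the extension lemma \ref{lem:extension} to the top square, the assumed $\mathcal C(W'[m,n,r])^{exp}$ case to the bottom square, and combining them with the splitting lemma \ref{lem:splitting} via the factorizations $\pi_{W}=\pi'_{W}\circ i_{W}$ and $\pi_{W'}=\pi'_{W'}\circ i_{W'}$, using Remark \ref{rem:produit} for the identification $W\times_S S[m,n,r]\simeq W[m,n,r]$.
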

\begin{proof} 
	Considering the assumption and the diagram
$$\xymatrix{
	W\times_S X \ar[d]_{i} \ar[rr]^{\gamma \times Id_X} & & W' \times_S X \ar[d]^{i'} \\
	W[m,n,r] \ar[rr]^{\gamma \times Id_{S[m,n,r]}} \ar[d]_{\pi_W}  & & W'[m,n,r] \ar[d]^{\pi_{W'}} \\
	W \ar[rr]^{\gamma} & & W'
}
$$
the lemma follows from the extension lemma \ref{lem:extension} and the splitting lemma \ref{lem:splitting}. 
\end{proof}

\subsection{Proof of theorem \ref{mainthm}} \label{proof-thm}
In this subsection, we assume lemma \ref{projlem} true and we prove theorem \ref{mainthm} as a consequence of the extension lemma \ref{lem:extension} and the splitting lemma \ref{lem:splitting}.

\begin{proof}
Let $S$ be a definable set in $\Def_k$ and $\gamma : W \ra W'$ be a definable morphism in $\Def_S$. Let $f:X\to Y$ be a definable morphism in $\Def_S$. We denote by $\Gamma_f$ the graph of $f$, by $i_f:X \to \Gamma_f$ the canonical injection.
In the following we will identify canonically $W\times_S \Gamma_f$ and 
$W'\times_S \Gamma_f$ with $(W\times_S Y) \times_Y X$ and $(W'\times_S Y) \times_Y X$. We consider the following commutative diagram 

$$\xymatrix{
	& W\times_S X \ar[dddd]_{Id_W\times f} \ar[ld]_{Id_W \times i_f} \ar[r]^{\gamma \times Id_X} & W'\times_S X \ar[dr]^{Id_W' \times i_f} \ar[dddd]^{Id_{W'}\times f} &    \\
	W\times_S \Gamma_f  \ar[rrr]^{\gamma \times Id_{\Gamma_f}} &                          &                   &    W'\times_S \Gamma_f  \\ 
        \simeq & & & \simeq \\
	(W\times_S Y) \times_Y X  \ar[dr]_{\pi_{W\times_S Y}} \ar[rrr]^{\gamma \times Id_Y \times Id_X} & & & (W'\times_S Y) \times_Y X \ar[dl]^{\pi_{W'\times_{S} Y}}\\
	& W\times_S Y \ar[r]^{\gamma \times Id_Y}   & W'\times_S Y  & 
}
$$
The theorem follows from this diagram and the splitting lemma \ref{lem:splitting} (or very similar arguments) whose assumptions are satisfied by application of \begin{itemize}
	\item[$\bullet$]  the extension lemma \ref{lem:extension} for the diagram 
        $$\xymatrix{
	 W\times_S X \ar[d]_{Id_W \times i_f} \ar[r]^{\gamma \times Id_X} & W'\times_S X \ar[d]^{Id_W' \times i_f}  \\
         W\times_S \Gamma_f  \ar[r]^{\gamma \times Id_{\Gamma_f}} &  W'\times_S \Gamma_f
          }
	$$

\item[$\bullet$] the projection lemma \ref{projlem} (relatively to $Y$) for the diagram 
$$\xymatrix{ 
	(W\times_S Y) \times_Y X  \ar[d]_{\pi_{W\times_S Y}} \ar[rr]^{\gamma \times Id_Y \times Id_X} & & (W'\times_S Y) \times_Y X \ar[d]^{\pi_{W'\times_S Y}}\\
	 W\times_S Y \ar[rr]^{\gamma \times Id_Y}  &  & W'\times_S Y   
}.
$$
\end{itemize}
\end{proof}

\subsection{Proof of the projection lemma for constructible functions without exponentials} \label{cas-sans-exp}

We prove in this subsection the projection lemma \ref{projlem} for constructible functions in $C(W'\times_S X)$-case. The exponential case will be proved in subsection \ref{exponential-case}.

\subsubsection{Case $X=S[0,0,r]$} \label{case:S[0,0,r]}
In this subsection, we prove lemma \ref{projlem} in the case $X=S[0,0,r]$. We use remark \ref{rem:produit} and notations of lemma \ref{projlem}. 
Let $\varphi$ be a constructible function in $\C(W'\times_S X)$. 
By proposition \ref{prop:dissociation} we write $\varphi = \varphi_{W'} \otimes \varphi_{\mathcal P}$, where $\varphi_{W'}$ is a constructible function in $\mathcal C(W')$ and 
$\varphi_P$ is a Presburger function in $\mathcal P(W'\times_S X)$. By Axiom \ref{axiom:projZ}, $[\varphi]$ is $\pi_{W'}$-integrable if and only if $\varphi_P$ is $W'$-integrable. The pull-back $(\gamma\times Id_{X})^*\varphi$ is equal to 
$(\varphi_{W'}\circ \gamma) \otimes (\varphi_{\mathcal P} \circ (\gamma \times Id_{X}))$, then by Axiom \ref{axiom:projZ}, we deduce that 
if $\varphi$ is $\pi_{W'}$-integrable then $(\gamma\times Id_{X})^*\varphi$ is $\pi_W$-integrable and furthermore if $\gamma$ is onto then this is an equivalence. 
Under the integrability assumption, $\pi_{W'!}[\varphi]$ is equal to the class of $\varphi_{W'}\otimes \pi_{W'!}(\varphi_{\mathcal P})$, where 
$\pi_{W'!}(\varphi_{\mathcal P})$ is the unique Presburger function in $\mathcal P(W')$ such that for any $w'$ in $W'$, for any $q>1$
$$\nu_q\left( (\pi_{W'!}\varphi_{\mathcal P})(w') \right) = \sum_{x\in \mathbb Z^r} \nu_q(\varphi_{\mathcal P}(w',x)).$$
In particular, 
$$ \gamma^{*}\left(\pi_{W'!}[\varphi]\right) = [(\varphi_{W'}\circ \gamma) \otimes (\pi_{W'!}(\varphi_{\mathcal P})\circ \gamma)].$$
As well, 
$\pi_{W!}\left[ (\gamma\times Id_X)^*\varphi \right]$ is equal to the class 
$[(\varphi_{W'}\circ \gamma) \otimes \pi_{W!}(\varphi_{\mathcal P} \circ (\gamma \times Id_{X}))]$ where 
$\pi_{W!}(\varphi_{\mathcal P} \circ (\gamma \times Id_{X}))$ is the unique Presburger function in $\mathcal P(W)$ such that 
for any $w$ in $W$, for any $q>1$ 
$$\nu_q\left(\pi_{W!}(\varphi_{\mathcal P} \circ (\gamma \times Id_{X}))(w)\right) =
\sum_{x \in \mathbb Z^r} \nu_q\left( (\varphi_{\mathcal P} \circ (\gamma \times Id_{X}))(w,x)
\right).
$$
But, for any $q>1$ and $w$ in $W$ 
$$\nu_q\left(\pi_{W!}(\varphi_{\mathcal P} \circ (\gamma \times Id_{X}))(w)\right) = 
\sum_{x \in \mathbb Z^r} \nu_q\left( \varphi_{\mathcal P}(\gamma(w),x) \right) = \nu_q \left( \pi_{W'!}\varphi_{\mathcal P}(\gamma(\omega)) \right)
$$
then by uniqueness $\pi_{W!}(\varphi_{\mathcal P} \circ (\gamma \times Id_{X}))$ is equal to 
$\pi_{W'!}(\varphi_{\mathcal P}) \circ \gamma$ and we deduce the equality \ref{thm:equality} of lemma \ref{projlem}.

\subsubsection{Case $X=S[0,n,0]$} \label{case:S[0,n,0]}
In this subsection, we prove lemma \ref{projlem} in the case where $X=S[0,n,0]$. We use remark \ref{rem:produit} and notations of lemma \ref{projlem}. Let $\varphi$ be a constructible function in $\C(W'\times X)$. 
By proposition \ref{prop:dissociation}, $\varphi$ can be written as 
$\varphi = [p:Y\ra W'\times X] \otimes \varphi_{\mathcal P}$ with 
$[p:Y\ra W'\times X]$ in $K_0(\RDef_{W'\times X})$ and $\varphi_{\mathcal P}$ in $\mathcal P(W')$. By Axiom \ref{axiom:projk}, 
the class $[\varphi]$ is $\pi_{W'}$-integrable with 
$$\pi_{W'!}[\varphi] = [\pi_{W'}\circ p : Y \ra W'] \otimes \varphi_{\mathcal P}$$ and 
$\gamma^*(\pi_{W'!}[\varphi]) = [Y\times_{W'}W \ra W]\otimes (\varphi_{\mathcal P} \circ \gamma)$.
As well, $(\gamma \times Id_X)^*\varphi$ is equal to 
$[Y\times_{W'\times X}(W\times X)\ra W\times X]\otimes (\varphi_{\mathcal P} \circ \gamma)$ 
and is $\pi_W$-integrable. As $(W\times X, \pi_W, \gamma\times Id_X)$ is isomorphic to the fiber product $W\times_{W'} (W'\times X)$ of $\gamma$ and $\pi_{W'}$, we deduce similarly to the case $X=S[0,0,r]$, by the classical pull-back theorem, that $Y\times_{W'\times X}(W\times X)$ and $Y\times_{W'} (W\times X)$ are isomorphic, which induces the equality \ref{thm:equality} of lemma \ref{projlem}.

\subsubsection{Case $X=S[0,n,r]$} \label{case:S[0,n,r]}
The lemma \ref{projlem} in the case $X=S[0,n,r]$ follows immediately from the splitting lemma \ref{lem:splitting} applied to the case $X=S[0,0,r]$ in \ref{case:S[0,0,r]} and the case $X=S[0,n,0]$ in \ref{case:S[0,n,0]}.

\subsubsection{Case where $X$ is a $0$-cell and $\varphi=1_{W'\times_S X}$} \label{case:0-cell}
Let $X$ be a $0$-cell of base $Y$ in $\Def_S$ with center $c:Y \ra h[1,0,0]$ 
$$X=\{(y,z)\in Y[1,0,0] \mid z=c(y)\}.$$
We denote by $\pi$ the projection from $X$ to $Y$ which is an isomorphism. 
The product $W\times_S X$ is also a $0$-cell of base $W\times_S Y$ and center $c_W = c \circ \pi_{Y}$ where $\pi_{Y}$ is the projection from $W\times_S Y$ to $Y$. As well the product $W'\times_S X$ is a $0$-cell of base $W'\times_S Y$, center $c_{W'} = c \circ \pi'_{Y}$ where $\pi'_{Y}$ is the projection from $W'\times_S Y$ to $Y$.
We prove lemma \ref{projlem} for the constructible function $\varphi=1_{W'\times_S X}$ and the diagram
$$\xymatrix{
	W\times_S X \ar^{\gamma \times Id_X}[r] \ar_{p_W=Id_W\times \pi}[d] & W'\times_S X \ar^{p_{W'}=Id_{W'}\times \pi}[d]\\
	W\times_S Y \ar_{\gamma \times Id_Y}[r] & W'\times_S Y
}.
$$
By definition of the pull-back of a Presburger function we have
$$(\gamma \times Id_X)^*\varphi = \varphi \circ (\gamma \times Id_X) = 1_{W\times_S X}.$$
Then, by the change variable formula \cite[Proposition 13.2.1]{CluLoe08a} (see theorem \ref{cov}), the class $[\varphi]$ is $p_{W'}$-integrable and the class $[(\gamma\times Id_X)^{*}\varphi]$ is also $p_W$-integrable with 
$$p_{W'!}[\varphi] = \mathbb L^{\ord \Jac p_{W'} \circ p_{W'}^{-1}}\:\: \text{and}\:\:
p_{W!}[(\gamma \times Id_X)^*\varphi] = \mathbb L^{\ord \Jac p_{W} \circ p_{W}^{-1}}.$$

Remark that by definition of $p_W$ and $p_{W'}$, the order $\ord \Jac p_{W'} \circ p_{W'}^{-1}$ is equal to the order
$\ord  \Jac \pi \circ (\pi^{-1} \circ \pi'_Y)$. Similarly, the order 
$\ord \Jac p_{W} \circ p_{W}^{-1}$ is equal to the order $\ord  \Jac \pi \circ (\pi^{-1} \circ \pi_{Y})$.
Then, the equality \ref{thm:equality} of lemma \ref{projlem} follows from the equality 
$\pi_Y = \pi'_Y \circ (\gamma \times Id_Y)$.

\subsubsection{Case where $X$ is $1$-cell and $\varphi=1_{W'\times_S X}$} \label{case:1-cell}
Let $X$ be a $1$-cell of base $Y$ in $\Def_S$ with center $c: Y \ra h[1,0,0]$ and data $\alpha:Y \ra \mathbb Z$ and $\xi:Y\ra h[0,1,0]$ 
$$X=\{(y,z)\in Y[1,0,0] \mid \ord (z-c(y))=\alpha(y),\: \ac(z-c(y))=\xi(y) \}.$$
We denote by $\pi$ the projection from $X$ to $Y$.
The product $W\times_S X$ is still a $1$-cell with base $W\times_S Y$, center  $c_W=c\circ \pi_Y$ and data $\alpha_W=\alpha\circ \pi_Y$ and
$\xi_W = \xi \circ \pi_Y$, with $\pi_Y$ the projection from $W\times_S Y$ to $Y$. As well, the product $W'\times_S X$ is still a $1$-cell with base $W'\times_S Y$, center  $c_{W'}=c\circ \pi'_Y$ and data $\alpha_{W'}=\alpha\circ \pi'_Y$ and $\xi_{W'} = \xi \circ \pi'_Y$ with $\pi'_Y$ the projection from $W'\times_S Y$ to $Y$. 
We prove lemma \ref{projlem} for the constructible function $\varphi=1_{W'\times_S X}$ and the diagram
$$\xymatrix{
	W\times_S X \ar^{\gamma \times Id_X}[r] \ar_{p_W=Id_W\times \pi}[d] & W'\times_S X \ar^{p_{W'}=Id_{W'}\times \pi}[d]\\
	W\times_S Y \ar_{\gamma \times Id_Y}[r] & W'\times_S Y
}.
$$
By definition of the pull-back of a Presburger function we have
$$(\gamma \times Id_X)^*\varphi = \varphi \circ (\gamma \times Id_X) = 1_{W\times_S X}.$$
Then, by Axiom \ref{volume-boule}, (see axiom (A7) of \cite[theorem 10.1.1, Proposition 13.2.1]{CluLoe08a}), the class $[\varphi]$ is $p_{W'}$-integrable and the class $[(\gamma\times Id_X)^{*}\varphi]$ is also $p_W$-integrable with 
$$p_{W'!}[\varphi] = \mathbb L^{-\alpha_{W'}-1}[1_{W'\times_S Y}] \:\: \text{and}\:\:
p_{W!}[(\gamma \times Id_X)^*\varphi] = \mathbb L^{-\alpha_{W}-1}[1_{W\times_S Y}].$$
Then, the equality \ref{thm:equality} of lemma \ref{projlem} follows from the equality $\pi_Y = \pi'_Y \circ (\gamma \times Id_Y)$.

\subsubsection{Proof of the projection lemma \ref{projlem}} \label{cas-sans-exp-proof}
Using the reduction lemma \ref{lem:reduction}, it is enough to consider the case where $X$ is equal to the definable set $S[m,n,r]$ for $m$, $n$ and $r$ some non-negative integers. 
We use remark \ref{rem:produit} and notations of lemma \ref{projlem}. The projection lemma \ref{projlem} is proved by induction on $m$. 
The base case $m=0$ is ever considered in paragraph \ref{case:S[0,n,r]} and using the splitting lemma \ref{lem:splitting}, it is enough to prove the projection lemma \ref{projlem} for the diagram
\begin{equation} \label{keydiag}
\xymatrix{
	W\times_S X \ar^{\gamma \times Id_X}[r] \ar_{\pi_{\overline{W}}}[d] & W'\times_S X \ar^{\pi_{\overline{W'}}}[d] \\
	\overline{W} \ar_{\gamma \times Id_{S[m-1,n,r]}}[r] & \overline{W'}
}
\end{equation}
with $m\geq 1$, $\overline{W'}=W'[m-1,n,r]$, $\overline{W}=W[m-1,n,r]$ and where $\pi_{\overline{W}}$ and $\pi_{\overline{W'}}$ are the canonical projections.

We prove now the case of diagram (\ref{keydiag}) using the cell decomposition theorem \ref{celldecompthm} and the specific cases of $0$-cell and $1$-cell in paragraphs \ref{case:0-cell} and \ref{case:1-cell}. 
As $m\geq 1$, we consider $W'\times_S X$ as the product $\overline{W'}[1,0,0]$ and $W\times_S X$ as the product $\overline{W}[1,0,0]$.
Let $\varphi$ be a constructible function in $\C(W'\times_S X)$.
By theorem \ref{celldecompthm}, we consider a cell decomposition $\left((W'\times_S X)_i\right)_{i\in I}$ of $W'\times_S X$ adapted to $\varphi$, with for any $i$ in $I$, a presentation $(Z_{C_i'},\lambda_i')$ of the cell $(W'\times_S X)_i$

\begin{equation} \label{diag-presentation}
\xymatrix{
	(W'\times_S X)_i \ar^-{\lambda_i'}[r] \ar_{\pi_{\overline{W'}}}[d] & Z_{C_i'} \subset 
	\overline{W'}[1,n_i,r_i] \ar^-{p_i'}[d] \\
	\overline{W'} & C_i'\subset{\overline{W'}[0,n_i,r_i]} \ar_-{\pi_i'}[l]
}
\end{equation}
where the diagram is commutative, $p_i'$ and $\pi_i'$ are the canonical projections and  
\begin{itemize}
	\item   if $(W'\times_S X)_i$ is a $0$-cell, then the integer $r_i$ is equal to $0$ and the isomorphism $\lambda_i'$ is equal to 
		$Id_{(W'\times_S X)_i}\times \eta_i$ where $\eta_i : (W'\times_S X)_i\ra h[0,n_i,0]$ is a definable morphism. The jacobian order of the isomorphism $\lambda_i'$ is 0.
	\item   if $(W'\times_S X)_i$ is a $1$-cell, then the isomorphism $\lambda_i'$ is equal to 
		$Id_{(W'\times_S X)_i}\times \eta_i \times l_i$ where $\eta_i : (W'\times_S X)_i\ra h[0,n_i,0]$ and $l_i:(W'\times_S X)_i \ra h[0,0,r_i]$ are
		two definable morphisms. The jacobian order of the isomorphism $\lambda_i'$ is 0.

\end{itemize}

Taking a refinement of the cell decomposition, we may assume that for any $i$ in $I$, 
$\varphi_{|(W'\times_S X)_i}$ is either zero or has the same $K$-dimension as $(W'\times_S X)_i$ (see \ref{def:cef} and also proof 
\cite[\S 11.1]{CluLoe08a}). Furthermore, for any $i$ in $I$, there is a constructible function $\psi_i'$ in $\C(C_i')$ such that 
\begin{equation} \label{presentation-phi}
\varphi_{|(W'\times_S X)_i} = (\lambda_i')^*(p_i')^*\psi_i'.
\end{equation}

Taking the pull-back by the definable morphism $\gamma\times Id_X$, we deduce a cell decomposition $((W\times_S X)_i)_{i \in I}$ of $W\times_S X$ adapted to 
$(\gamma \times Id_X)^*\varphi$. 
More precisely, for any $i$ in $I$, we define the cell $(W\times_S X)_i$ as the definable set $(\gamma\times Id_X)^{-1}(W'\times_S X)_i$. This cell has a presentation $(Z_{C_i},\lambda_i)$ where
$$C_i=\{(w,x,\eta,l)\in W[m-1,n+n_i,r+r_i] \mid (\gamma(\omega),x,\eta,l) \in C_{i}'\},$$
and
\begin{itemize}
	\item  if $Z_{C_i'}$ is a $1$-cell with center $c_i'$ and data $\alpha_i'$ and $\xi_i'$, then 
		$Z_{C_i}$ is the $1$-cell in $\overline{W}[1,n_i,r_i]$ with center 
		$c_i=c_i'\circ (\gamma\times Id_{S[m-1,n+n_i,r_i]})$ and data 
		$$\alpha_i = \alpha_i'\circ (\gamma\times Id_{S[m-1,n+n_i,r_i]}) \:\: \text{and} \:\: 
		\xi_i = \xi_i' \circ (\gamma\times Id_{S[m-1,n+n_i,r_i]})$$ 
		and the presentation morphism $\lambda_i : (W\times_S X)_i \to Z_{C_i}$ is 
$$\lambda_i=Id_{(W\times_S X)_i}\times (\eta_i \circ (\gamma \times Id_X)) \times (l_i\circ (\gamma \times Id_X)).$$
	\item  if $Z_{C_i'}$ is a $0$-cell with center $c_i'$, then 
		$Z_{C_i}$ is the $0$-cell in $\overline{W}[1,n_i,0]$ with center 
		$$c_i=c_i'\circ (\gamma\times Id_{S[m-1,n+n_i,r_i]})$$ 
		and the presentation morphism $\lambda_i : (W\times_S X)_i \to Z_{C_i}$ is
$$\lambda_i=Id_{(W\times_S X)_i}\times (\eta_i \circ (\gamma \times Id_X)).$$
\end{itemize}

For any $i$ in $I$, we consider the constructible function 
$$\psi_i:=(\gamma \times Id_{S[m-1,n+n_i,r+r_i]})^*(\psi_i')$$ and we have the equalities
$$(\gamma \times Id_X)^* 1_{(W'\times_S X)_i} = 1_{(W\times_S X)_i}$$
\begin{equation} \label{presentation-pull-back-phi}
	(\gamma \times Id_X)^*\varphi \cdot 1_{(W\times_S X)_i} = 
 (\gamma \times Id_X)^* (\lambda_i'^*p_i'^* \psi_i') = \lambda_i^* p_i^* \psi_i
 \end{equation}
 thanks to the equality $$p_i'\circ \lambda_i' \circ (\gamma \times Id_X) = (\gamma \times Id_{S[m-1,n+n_i,r+r_i]})\circ p_i \circ \lambda_i$$
 with $p_i$ the canonical projection from $\overline{W}[1,n_i,r_i]$ to $\overline{W}[0,n_i,r_i]$.

 By the additivity axiom (Axiom \ref{axiom:additivity}), as the cells $(W'\times_S X)_i$ and $(W\times_S X)_i$ are disjoint:
 \begin{itemize}
	 \item  the class $[\varphi]$ is $\pi_{\overline{W'}}$\:-\:integrable if and only if for any $i$ in $I$, 
		 $\varphi 1_{(W'\times_S X)_i}$ is $\pi_{\overline{W'}}$\:-\:integrable, if and only if for any $i$ in $I$, the class $[\psi_i'] p_{i!}' [1_{Z_{C_i'}}]$ is $\pi_i'$\:-\:integrable (applying to equation \ref{presentation-phi} and diagram \ref{diag-presentation}, Fubini axiom (Axiom \ref{axiom-Fubini}), change variable formula (theorem \ref{cov}) and projection axiom (Axiom \ref{axiom-projection})), and in that case 
		 \begin{equation} \label{eq:1}
			 \pi_{\overline{W'}!}[\varphi.1_{(W'\times_S X)_i}] = \pi_{i!}'([\psi_i']p_{i!}'[1_{Z_{C_i'}}]).
		 \end{equation}
	 \item the class $[(\gamma \times Id_X)^*\varphi]$ is $\pi_{\overline{W}}$\:-\:integrable if and only if for all $i$
		 in $I$, the class of $(\gamma \times Id_X)^*\varphi\cdot 1_{(W\times_S X)_i}$ is $\pi_{\overline{W}}$\:-\:integrable,
		 if and only if for any $i$ in $I$, $[\psi_i] p_{i!} [1_{Z_{C_i}}]$ is $\pi_i$\:-\:integrable (by Fubini axiom (Axiom \ref{axiom-Fubini}), change variable formula and projection axiom (Axiom \ref{axiom-projection}) applied to equation \ref{presentation-pull-back-phi}), and in that case 
		 \begin{equation} \label{eq:2}
		   \pi_{\overline{W}!}[(\gamma \times Id_X)^*\varphi \cdot 1_{(W\times_S X)_i}] = \pi_{i!}([\psi_i]p_{i!}[1_{Z_{C_i}}]).
                 \end{equation}
 \end{itemize}
 But, by construction, $1_{Z_{C_i}} = (\gamma \times Id_{S[m,n+n_i,r+r_i]})^* 1_{Z_{C_i'}}$ and by the $0$-cell and $1$-cell case in sections $\ref{case:0-cell}$ and $\ref{case:1-cell}$ we have the identity 
 $$(\gamma \times Id_{S[m-1,n+n_i,r+r_i]})^*p_{i!}'[1_{Z_{C_i}}] = p_{i!}[1_{Z_{C_i}}]$$
 which implies the equality 
 \begin{equation} \label{eq:3}
	 [\psi_i] p_{i!}[1_{Z_{C_i}}] = (\gamma \times Id_{S[m-1,n+n_i,r+r_i]})^*([\psi_i']p_{i!}'[1_{Z_{C_i'}}]).
 \end{equation}

 Then, for any $i$ in $I$, using subsection \ref{case:S[0,n,r]} for the diagram 
 $$
 \xymatrix{ C_i \subset \overline{W}[0,n_i,r_i] \ar^{\gamma \times Id_{S[m-1,n+n_i,r+r_i]}}[rrr] \ar_{\pi_i}[d] &&&
 C_i' \subset \overline{W'}[0,n_i,r_i] \ar^{\pi_i'}[d] \\
 \overline{W} \ar^{\gamma \times Id_{S[m-1,n,r]}}[rrr] &&& \overline{W'}
 }
 $$
 with $\pi_i$ and $\pi_i'$ the canonical projections, we deduce that if the class $[\psi_i']p_{i!}'[1_{Z_{C_i}}]$ is $\pi_{i'}$-integrable then 
 $(\gamma \times Id_{S[m-1,n+n_i,r+r_i]})^*([\psi_i']p_{i!}'[1_{Z_{C_i'}}])$ equal to 
 $[\psi_i] p_{i!}[1_{Z_{C_i}}]$ is also $\pi_i$-integrable and if $\gamma$ is onto, this is an equivalence. 
 In that case we have the equality
  $$\pi_{i!}((\gamma \times Id_{S[m-1,n+n_i,r+r_i]})^*([\psi_i']p_{i!}'[1_{Z_{C_i'}}])$$
 \begin{equation} \label{eq:4}
	   = 
 \end{equation}
 $$(\gamma \times Id_{S[m-1,n,r]})^*(\pi_{i'!}([\psi_i']p_{i!}'[1_{Z_{C_i}}])).$$

 Then, we can conclude that for any $i$ in $I$, if the class 
 $[\varphi. 1_{(W'\times_S X)_i}]$ is $\pi_{\overline{W'}}$-integrable then the class 
 $[(\gamma \times Id_X)^*\varphi.1_{(W\times_S X)_i}]$ is $\pi_{\overline{W}}$-integrable and this is an equivalence in the case of $\gamma$ onto.
 In that case by equations (\ref{eq:1}), (\ref{eq:2}), (\ref{eq:3}) and (\ref{eq:4}), we get for any $i$ in $I$
 $$\pi_{\overline{W}!}[(\gamma \times Id_X)^*(\varphi.1_{(W'\times_S X)_i})] = (\gamma \times Id_{S[m-1,n,r]})^*
 \pi_{W'!}([\varphi. 1_{(W'\times_S X)_i}])$$
 By additivity Axiom \ref{axiom:additivity} and summation we conclude that if $[\varphi]$ is $\pi_{\overline{W'}}$-integrable then the class 
 $[(\gamma \times Id_X)^*\varphi]$ is $\pi_{\overline{W}}$-integrable and this is an equivalence in the case of $\gamma$ onto, and in that case 
 $$\pi_{\overline{W}!}[(\gamma \times Id_X)^*\varphi] = (\gamma \times Id_{S[m-1,n,r]})^*
 \pi_{W'!}([\varphi]).$$

 \begin{rem} \label{remsansexp} This achieves the proof of lemma \ref{projlem} in the $C(W\times_S X)$-case, which implies that theorem \ref{mainthm} is also true in this setting. We will use both of them in the proof of lemma \ref{projlem} in the $C(W\times_S X)^{exp}$-context.
 \end{rem}
 
\subsection{$C(W'\times_S X)^{exp}$-case}. \label{exponential-case}

Let $X$ be a definable in $Def_S$, by the reduction lemma \ref{lem:reduction} we can assume that $X=S[m,n,r]$.
Let $\gamma : W\to W'$ be a morphism in $\Def_S$. Let $\pi_{W'}:W'\times_S X \to W'$ and $\pi_{W}:W\times_S X \to X$ be the canonical projections.
We consider the diagram 
$$\xymatrix{
	W\times_S X \ar_{\pi_W}[d] \ar^{\gamma \times Id_X}[r] & W'\times_S X \ar^{\pi_{W'}}[d] \\
	W \ar^{\gamma}[r] & W'
}
$$
Let $\varphi$ be a constructible function in $\mathcal C(W'\times_S X)^{exp}$, then by definition $\varphi$ can be written as 
\begin{equation} \label{eq:varphi}
	\varphi = E(g')e(\eta')[p':Y'\to W'\times_S X]\otimes \varphi_0
\end{equation} 
where $g':W'\times_S X \to h[1,0,0]$, $\eta':W'\times_S X \to h[0,1,0]$, $p':Y'\to W'\times_S X$ are definable functions with $Y'$ a definable subset of some 
$(W'\times_S X)[0,n,0]$ and $\varphi_0$ is a constructible function in 
$\mathcal C(W'\times_S X)$. 
In particular the pull-back by $\gamma\times Id_X$ of $\varphi$ is 
\begin{equation}
	(\gamma\times Id_X)^* \varphi = E(g)e(\eta)[p:Y\to W\times_S X] \otimes (\gamma\times Id_X)^* \varphi_0
\end{equation}
where $p:Y\to W \times X$, $g$ and $\eta$ are the pull-back of $p'$, $g'$ and $\eta'$ by $\gamma \times Id_X$.

Furthermore, by definition (see subsection \ref{push forward}), $[\varphi]$ is $\pi_{W'}$-integrable if and only if $[\varphi_0]$ is $\pi_{W'}$-integrable and in that case, it follows from theorem 4.1.1 in \cite{CluLoe10a} and the uniqueness part of its proof $\S 6.3$ that
\begin{equation} \label{eq:int-varphi}
	\pi_{W'!}[\varphi] = 
	\left(\pi_{W'!}^{W'[0,1,0]}\right)_{!} \left(\pi_{W'[0,1,0]}^{W'[1,1,0]}\right)_{!}\left(E(x)e(\xi)\otimes \delta'_{!}(p'^{*}[\varphi_0])\right).
\end{equation}
where we use the diagram
$$
\xymatrix{
	Y' \ar_{p'}[dd] \ar^{\delta'}[rr] & & W'[1,1,0] \ar^{\pi^{W'[1,1,0]}_{W'[0,1,0]}}[d] \\
	   & & W'[0,1,0] \ar^{\pi^{W'[0,1,0]}_{W'}}[d]\\
	   W\times_S X \ar_{\pi_{W'}}[rr]  & &  W'
}
$$
where $x$ and $\xi$ are coordinate on the vector bundle $W'[1,1,0]$ and $\delta'$ is the definable function from $Y'$ to $W'[1,1,0]$ equal to $(\pi_{W'}\circ p',g'\circ p', \eta'\circ p')$. 
In particular, we deduce from the case without exponential in section \ref{cas-sans-exp}, that condition \ref{thm:integrability-condition} of lemma \ref{projlem} is satisfied. 
We have the following commutative diagram

$$\xymatrix{
	W[1,1,0] \ar^{\gamma \times Id_{S[1,1,0]}}[rrr] \ar_{\pi_{W[0,1,0]}^{W[1,1,0]}}[ddd] &           &            & W'[1,1,0] \ar^{\pi_{W'[0,1,0]}^{W'[1,1,0]}}[ddd] \\
	& Y \ar^{\gamma \times Id_{X[0,n,0]}}[r] \ar_{p}[d] \ar^{\delta}[lu]       & Y' \ar^{p'}[d] \ar_{\delta'}[ru]        &           \\ 
	& W\times X \ar^{\gamma \times Id_{X}}[r] \ar^{\pi_W}[lddd] & W'\times X \ar_{\pi_{W'}}[rddd]  &           \\
	W[0,1,0] \ar^{\gamma \times Id_{S[0,1,0]}}[rrr] \ar_{\pi_{W}^{W[0,1,0]}}[dd] &           &            & W'[0,1,0] \ar^{\pi_{W'}^{W'[0,1,0]}}[dd] \\ \\
	W  \ar^{\gamma}[rrr]      &           &            & W'
}.
$$
Using the splitting lemma (lemma \ref{lem:splitting}) to prove the equality \ref{thm:equality} of lemma \ref{projlem}

$$\gamma^{*}(\pi_{W'!}[\varphi]) = \pi_{W!}([(\gamma\times Id_X)^*\varphi])$$
it is enough to prove the result for the special cases 

\begin{equation}\label{cas1}
	\xymatrix{ W[0,1,0] \ar_{\pi_{W}^{W[0,1,0]}}[d] \ar^{\gamma \times Id_{S[0,1,0]}}[rr] && W'[0,1,0] \ar^{\pi_{W'}^{W'[0,1,0]}}[d]  \\
	W \ar_{\gamma}[rr] && W'}
\end{equation}
        
\begin{equation}\label{cas2}
	\xymatrix{ W[1,1,0] \ar_{\pi_{W[0,1,0]}^{W[1,1,0]}}[d] \ar^{\gamma \times Id_{S[1,1,0]}}[rr] && W'[1,1,0] \ar^{\pi_{W'[0,1,0]}^{W'[1,1,0]}}[d]  \\
	W[0,1,0] \ar_{\gamma}[rr] && W'[0,1,0]}.
\end{equation}

Indeed, by equation (\ref{eq:int-varphi}) we have 

$$ \gamma^{*}(\pi_{W'!}[\varphi]) = \gamma^* \left(\left(\pi_{W'!}^{W'[0,1,0]}\right)_{!} 
\left(\pi_{W'[0,1,0]}^{W'[1,1,0]}\right)_{!}\left[E(x)e(\xi)\otimes \delta'_{!}(p'^{*}[\varphi_0])\right]\right).$$

Then, it follows from lemma \ref{projlem} in the case of diagram (\ref{cas1}) that 

$$\gamma^{*}(\pi_{W'!}[\varphi]) =  \left(\pi_{W!}^{W[0,1,0]}\right)_{!} \left[ (\gamma \times Id_{S[0,1,0]})^*
\left(\left(\pi_{W'[0,1,0]}^{W'[1,1,0]}\right)_{!}\left[E(x)e(\xi)\otimes \delta'_{!}(p'^{*}[\varphi_0])\right]\right)
\right],
$$
then, it follows from lemma \ref{projlem} in the case of diagram (\ref{cas2}) that 
$$\gamma^{*}(\pi_{W'!}[\varphi]) =  \left(\pi_{W!}^{W[0,1,0]}\right)_{!} \left[ \left(\pi_{W[0,1,0]}^{W[1,1,0]}\right)_{!}
(\gamma \times Id_{S[1,1,0]})^* \left(\left[E(x)e(\xi)\otimes\delta'_{!}(p'^{*}[\varphi_0])\right]\right)\right],
$$
and finally, as $x$ and $\xi$ are coordinates in the bundle $W'[1,1,0]$ and are independent from $W'$ we have

$$\gamma^{*}(\pi_{W'!}[\varphi]) =  \left(\pi_{W!}^{W[0,1,0]}\right)_{!} \left(\pi_{W[0,1,0]}^{W[1,1,0]}\right)_{!}
\left[E(x)e(\xi) \otimes (\gamma \times Id_{S[1,1,0]})^* \left[\delta'_{!}(p'^{*}[\varphi_0])\right]
\right].
$$

Applying theorem \ref{mainthm} in the case without exponential (see remark \ref{remsansexp}) we obtain the equality
$$ (\gamma \times Id_{S[1,1,0]})^* \left(\delta'_{!}(p'^{*}[\varphi_0])\right) =  \delta_{!}\left((\gamma \times Id_{X[0,n,0]})^*(p'^{*}[\varphi_0])\right)$$
and by commutativity of the diagram we have 

$$ (\gamma \times Id_{S[1,1,0]})^* \left(\delta'_{!}(p'^{*}[\varphi_0])\right) =  \delta_{!}\left(p^*(\gamma \times Id_{X})^*[\varphi_0]\right)$$
and we can finally conclude

$$\gamma^{*}(\pi_{W'!}\varphi) =  \left(\pi_{W!}^{W[0,1,0]}\right)_{!}  \left(\pi_{W[0,1,0]}^{W[1,1,0]}\right)_{!} \left(
E(x)e(\xi) \otimes \delta_{!}\left[p^*(\gamma \times Id_{X})^*\varphi_0\right]
\right).
$$
namely by equation (\ref{eq:varphi} )
$$\gamma^{*}(\pi_{W'!}[\varphi]) = \pi_{W!}([(\gamma\times Id_X)^*\varphi]).$$

We conclude now by the proof of the projection lemma \ref{projlem} in the cases of diagrams (\ref{cas1}) and (\ref{cas2}). 
\begin{itemize}
	\item[$\bullet$] As in the paragraph \ref{case:S[0,n,0]} the case of diagram (\ref{cas1}) follows by the definition of the push-forward in the residue variables in \cite[\S 3.6]{CluLoe10a}.

	\item[$\bullet$] Taking pull-back of a cell decomposition as in the proof of the case without exponential (see section \ref{cas-sans-exp}), the case of diagram \ref{cas2} follows directly from the construction in \cite[\S 5.1, lemma 5.1.1]{CluLoe10a} (sketched in example \ref{example-integration-avec-exponentielle}) where the set of parameters is $W'[0,1,0]$ for the integration along $\pi_{W'[0,1,0]}^{W'[1,1,0]}$ and $W[0,1,0]$ for the integration along $\pi_{W[0,1,0]}^{W[1,1,0]}$. 
		Without giving the details, the constructions (sketched in example \ref{example-integration-avec-exponentielle}) 
		of $\pi_{W'[1,1,0]!}^{W'[0,1,0]}([\varphi])$ and 
		$\pi_{W[1,1,0]!}^{W[0,1,0]}((\gamma \times Id_{S[0,1,0]})^*[\varphi])$ for a $\pi_{W'[1,1,0]!}^{W'[0,1,0]}$-integrable-constructible function $\varphi$ in $\mathcal C(W'[1,1,0])^{\exp}$ are step by step compatible with the base change from $W'[0,1,0]$ to $W[0,1,0]$, and the projection formula 
		$$(\gamma \times Id_{W'[0,1,0]})^*\left(\pi_{W'[1,1,0]!}^{W'[0,1,0]}([\varphi])\right) =
		\pi_{W[1,1,0]!}^{W[0,1,0]}\left((\gamma \times Id_{S[0,1,0]})^*[\varphi]\right)$$
		follows from equation \ref{formule-integration-exp-valuee} using above ideas and the case without exponential. 
	\end{itemize}

\subsection*{Acknowledgments}
We are very grateful to Raf Cluckers, Immanuel Halupczok and Fran\c{c}ois Loeser for inspiring discussions. The first author was supported by the European Research Council under the European Community's Seventh Framework Programme (FP7/2007-2013) with ERC Grant Agreement no. 615722 MOTMELSUM, by the Labex CEMPI (ANR-11-LABX-0007-01). The first author would also like to thank VIASM in Hanoi for the hospitality and the great environment for work, during a three months visit in the summer 2018. The second author is partially supported by ANR-15-CE40-0008 (Defigeo) and by the ERC Advanced Grant NMNAG.

\bibliographystyle{plain}
\bibliography{biblio_complete}

\end{document}